\documentclass[reqno]{amsart}
\usepackage{amsmath, amsfonts, amssymb, amsthm, esint, mathtools}
\usepackage[margin=1in]{geometry}
\usepackage{graphicx,xcolor}

\usepackage{cite}
\usepackage[colorlinks,linkcolor = blue,citecolor = green]{hyperref}

\newtheorem{theorem}{Theorem}[section]
\newtheorem{proposition}[theorem]{Proposition}
\newtheorem{corollary}[theorem]{Corollary}
\newtheorem{lemma}[theorem]{Lemma}
\newtheorem{defn}[theorem]{Definition}
\newtheorem{ass}[theorem]{Assumption}
\newtheorem*{ass*}{Assumption}

\newtheorem{remark}[theorem]{Remark}
\newtheorem{theorem*}{Theorem}[section]
\numberwithin{equation}{section}

\newcommand{\T}{\mathbb T}
 \usepackage[T1]{fontenc}

\newcommand{\R}{\mathbb R}
\newcommand{\C}{\mathbb C}

\DeclareMathOperator{\tr}{\mathrm{tr}}

\newcommand{\one}{\mathds{1}}
\newcommand{\eps}{\varepsilon}

\usepackage{comment, color}

\usepackage{dsfont}

 \def \E{\mathbb{E}}

\author{Nestor Guillen}
\address{Courant Institute, New York University, New York, NY 10012}
\email{nestor.g@nyu.edu}
\author{Vladimir A. Kobzar}
\address{Department of Mathematics, The Ohio State University, Columbus, OH 43210}
\email{kobzar.1@osu.edu}

\title{The Koml\'os conjecture for complex discrepancy}
\date{\today}

\begin{document}

\begin{abstract}

The Koml\'os conjecture is a classic problem in discrepancy theory; it asks whether an absolute  constant $K$ exists such that given any $n$ vectors $a_1,\ldots,a_n$ inside the $m$-dimensional Euclidean ball,  regardless of how large $m,n$ are, there is always a selection of signs $\varepsilon_1,\ldots,\varepsilon_n$ guaranteeing  
$$\|\varepsilon_1a_1+\ldots+\varepsilon_na_n\|_\infty \leq K.$$
We show that if the $\varepsilon_i$'s are allowed to take not just the values of $\pm 1$ but any unit modulus complex number, which we refer to as complex discrepancy, then the above inequality holds for a finite, explicit constant $K_{\mathbb{C}}$. Here, the $\ell^\infty$ norm of the resulting vector in $\mathbb{C}^m$ is the largest modulus of its entries, and thus the complex discrepancy of  real vectors is equivalent to their rank-$2$ vector discrepancy.  This  quantity provides an upper bound (modulo uniform constant prefactor) on Gaussian discrepancy -- a discrepancy measure introduced by Chewi, Gerber, Rigollet and Turner. Thus, we also resolve the Koml\'os conjecture for Gaussian discrepancy. Our paper builds upon the recent work of Bansal and Jiang on the Beck-Fiala and Koml\'os conjectures, which we approach from the formalism of Burkholder and the Bellman function method from probability and harmonic analysis. Our work was in part motivated by the realization that the complex discrepancy of the columns of any unitary matrix is equal to 1, a fact that follows from a straightforward calculation based on Idel and Wolf's generalization of the Sinkhorn normal form for unitary matrices.

\end{abstract}

\maketitle

\section{Introduction}\label{sec:intro}

Discrepancy theory encompasses a wide range of problems and methods in combinatorics, geometry and analysis  with applications in computer science and  scientific computing \cite{Matousek99,Travaglini14,Chazelle2000,   CST14}.  One of the main problems in \emph{combinatorial} discrepancy is vector balancing: given vectors $a_1,\ldots,a_n$ in a normed space,  our objective is to choose signs $\varepsilon_1,\ldots,\varepsilon_n \in \{-1,1\}$ in order to minimize
\begin{align*}
  \| \varepsilon_1a_1+\ldots+\varepsilon_n a_n\|.
\end{align*}

Colloquially,  the  significance of this problem  can be illustrated by the following   example \cite{levy26}. Assume that you want to divide  $n$ friends  into 2 teams of roughly equal strength to play a trivia contest covering books and movies.  We can represent the  skills of each player by a vector in $\R^2$. For example, if Alice knows a lot about books and nothing about movies, while  Bob knows nothing about books and a lot about movies, they will be represented by $(1,0)$ and $(0,1)$ respectively. The remaining players may know a bit about both books and movies, and therefore the corresponding vectors will have two strictly positive coordinates.  If there are $m$ trivia subjects, then each player will be described by a vector in $\R^m$; for large $m$ and $n$ the resulting balancing problem appears highly nontrivial.

We will write the  sum  displayed above more compactly as $ A\bar \varepsilon$ where $A$ denotes the $m \times n$ matrix whose columns are given by $a_{1},\ldots,a_{n} $.   The      \emph{discrepancy} of $A$ is defined as:  
\begin{align*}\label{e:disc}
  \text{disc}(A) := \min_{\bar \varepsilon \in \{\pm 1 \}^n} \|A\bar \varepsilon\|_\infty.
\end{align*}
This  classic version of discrepancy is sometimes referred to as \emph{Boolean}  discrepancy.

For given dimensions $m$ and $n$, we let $\mathcal A(m,n)$ represent the  set  of matrices in  $\R^{m \times n}$ with all columns having at most unit Euclidean length, which we will refer to as (real) \emph{Koml\'os  matrices}. The discrepancy of such matrices   is subject to a long-standing conjecture attributed to J\'anos Koml\'os \cite {Spe85}: 
\begin{align*}
 \emph{The Koml\'os conjecture: } K(m,n):= \max_{A\in \mathcal A(m,n)} \text{disc}(A) = O(1) \text{ as }  m,n\to \infty. 
\end{align*}
 Without loss of generality, it is sufficient to resolve this conjecture when $n = m$  \cite{BECK19811},  \cite[Lecture 5]{Spe94}. In the 1990's,  Banaszczyk established  estimates for a broad class of vector balancing problems, obtaining an $O(\sqrt{\log m})$ upper bound for $K(m,n)$ \cite {Ban98}.  Recently, Bansal and  Jiang improved on the state of the art by proving  a $\tilde O(\log ^{1/4} n)$ upper bound in the  Koml\'os setting \cite{BJ2025}.\footnote{The prefactor inside $\tilde O(\cdot)$ is polynomial in $\log\log n$, specifically $(\log\log n)^c$ for some fixed exponent $c>0$ uniformly in $m$. Bansal and Jiang  recently gave an explicit bound with exponent  $c=7/4$ \cite{BJ26}.}

Kunisky  showed that  $K(m,m) \geq 1 +\sqrt{2} -o(1)$, the best known lower bound, using  orthogonal matrices representing the discrete Haar transform \cite{Kun23}.  Pesenti and Vladu  proved the Koml\'os conjecture  for several types of deterministic and random matrices, including  random orthogonal matrices  using an algorithm based on the   $\ell^{1/2}$ regularization of the maximum function \cite{PesentiVladu2023} (see also \cite{pesenti26} for the barrier function interpretation of this regularizer and its connections to the Stieltjes transform). 

While  the Koml\'os conjecture  remained unresolved for many types of feasible matrices, including  (deterministic) orthogonal matrices,    a substantial amount of work was dedicated to studying the analogues of this conjecture for various relaxations of the classic discrepancy. In his proof of Roth's theorem on the discrepancy of the family of arithmetic progressions, Lov\'asz introduced one of the first relaxations of the classic discrepancy, called \emph{vector discrepancy} \cite{Lovasz00}:
 \begin{align*}
  \text{Vdisc}(A) := \min_{u_1, \dots, u_n \in \mathbb{S}^{n-1}} \max_{i \in [m]}\Big|\sum_{j \in [n]} A_{ij}u_j\Big|.
\end{align*}
 This optimization problem admits an equivalent semidefinite programming formulation, and Nikolov showed that an analogue of the Koml\'os conjecture holds in this setting: $\max_{A\in \mathcal A(m,n)} \text{Vdisc}(A) = 1$ \cite{nikolov13}.
 
 Jones and McPartlon introduced \emph{spherical discrepancy}: 
 \begin{align*}
  \text{Sdisc}(A) := \min_{u \in \sqrt {n} \mathbb{S}^{n-1}} \|A u\|_{\infty}.
\end{align*} 
Although this problem is non-convex, they established that an analogue of the Koml\'os conjecture also holds here: $\max_{A\in \mathcal A(m,n)} \text{Sdisc}(A) = O(1)$ using the partial coloring method of Lovett and Meka \cite{LM15, JM20}. (In Section \ref{sec:previous_work}, we provide additional context regarding this method.)

Chewi,  Gerber,  Rigollet and  Turner introduced \emph{Gaussian discrepancy}:
\[
\text{Gdisc}(A) := \min_{\sigma \in \Sigma} \E \| A G \|_\infty
\]
where $G$ is sampled from the Gaussian distribution  $N(0,\sigma)$ and the covariance matrix $\sigma$ belongs to the set $\Sigma$ of  positive semidefinite (PSD)  matrices  in $\R^{n \times n}$ with all ones on the diagonal \cite{CGRT22}. 
They also generalized vector discrepancy defined above to \emph{rank-constrained vector discrepancy}: given any integer $r$ in $[2, n]$: 
\begin{align*}
  \text{Vdisc}_r(A) := \min_{u_1, \dots, u_n \in \mathbb{S}^{r-1}} \max_{i \in [m]}\Big|\sum_{j \in [n]} A_{ij}u_j\Big|.
\end{align*}

In this work, we will focus on the \emph{complex discrepancy} of a matrix $C \in \C^{m\times n}$: 
\begin{align*}
  \text{cdisc}(C) := \min_{z \in \T^n} \|Cz\|_\infty.
\end{align*}
We identify each $z_j = e^{i\phi_j}$  with a unit  $\ell_2$ norm vector $u_j = (\cos \phi_j, \sin \phi_j) \in \R^2$.  Under this identification, for any real  matrix $A \in \R^{m \times n}$,
\begin{align}
 \Big|\sum_{j \in [n]} A_{ij}z_j\Big| = \Big|\sum_{j \in [n]}A_{ij}u_j\Big|, \label{eq:real_complex_identification}
\end{align}
and therefore
\begin{align} \label{eq:cdisc_vdisc2}
  \text{cdisc}(A) =\text{Vdisc}_2(A).
\end{align}
Combining Corollary 5 with Proposition 6 in \cite{CGRT22}, for any  $A \in \R^{m \times n}$, we obtain: 
 \begin{align}
  \text{Vdisc}(A) \vee  \text{Sdisc}(A) \lesssim \text{Gdisc}(A) \leq \sqrt{2} \, \text{Vdisc}_2(A) \leq  \sqrt{2}\, \text{disc}(A).  \label{eq:hierarchy_relaxations}
 \end{align}
 
 Let  $\mathcal C(m,n)$ denote the set of matrices in $ \C^{m \times n}$ with columns of  at most unit $\ell_2$ norm  (we will refer to  matrices in this set as the complex Koml\'os matrices). The Koml\'os conjecture for the complex discrepancy, which we denote as the \emph{complex Koml\'os  (CK) problem}, is 
\begin{align*}
\max_{C\in \mathcal C(m,n)} \text{cdisc}(C) = O(1) \text{ as }  m, n\to \infty. 
\end{align*} 
Our interest in the CK problem  was in part motivated by the realization that we can determine the complex discrepancy of any unitary matrix $U \in U(m)$ by a straightforward calculation based on Idel and Wolf's generalization of the Sinkhorn normal form for unitary matrices \cite{IW15}.

\begin{theorem}\label{thm:unitary}
If $U$ is a unitary $m \times m$ matrix, 
\begin{align*}
\text{cdisc}(U) =1.
\end{align*} 
\end{theorem}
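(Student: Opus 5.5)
Two inequalities are needed. For the lower bound, take any $z \in \T^n$ with $n=m$. Since $U$ is unitary, $\|Uz\|_2 = \|z\|_2 = \sqrt{m}$, and therefore
\[
\|Uz\|_\infty \geq \|Uz\|_2/\sqrt{m} = 1 .
\]
This gives $\text{cdisc}(U)\ge 1$ for every unitary $U$. Equality in this bound forces every entry of $Uz$ to have modulus exactly $1$. So the upper bound is equivalent to the following claim: there exist $z\in\T^m$ and $w\in\T^m$ with $Uz=w$. In words, $U$ maps some unimodular vector to a unimodular vector.

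To produce such vectors I will invoke the Idel--Wolf theorem \cite{IW15}. It states that every unitary $U$ can be written as
\[
U = D_1 V D_2 ,
\]
where $D_1,D_2$ are diagonal unitary matrices and $V$ is unitary with all row sums and all column sums equal to $1$. Equivalently, $V\mathbf 1=\mathbf 1$, where $\mathbf 1$ is the all-ones vector.

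Now set $z := D_2^{-1}\mathbf 1$, i.e.\ $z$ is the conjugate of the diagonal of $D_2$. Then $z\in\T^m$, and
\[
Uz = D_1 V\mathbf 1 = D_1\mathbf 1 .
\]
Every entry of $D_1\mathbf 1$ has modulus $1$, so $\|Uz\|_\infty=1$. Combined with the lower bound, this gives $\text{cdisc}(U)=1$.

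The substance of the argument is the existence of this Sinkhorn-type normal form, which we take from \cite{IW15}. Its proof is topological: it uses a Lagrangian-intersection / Arnold-type argument that the Clifford torus $\T^m$ and its image under $U$ intersect in projective space. For our purposes it can be quoted directly. Since $V\mathbf 1=\mathbf 1$ holds for every matrix in this normal form, no further computation is required. The only care needed is in matching conventions: whether the theorem is stated with $D_1$ and $D_2$ on the left and right as above, and whether it normalizes row sums or column sums. Either version gives the needed identity $V\mathbf 1=\mathbf 1$.
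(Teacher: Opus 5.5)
Your proof is correct and follows essentially the paper's route. The lower bound is the $\sigma_{\min}$ bound of Remark \ref{rem:elementary_cdisc_bounds} with $\sigma_{\min}(U)=1$. The upper bound is the biunimodular vector of Corollary \ref{cor:torus_mapping} obtained from the Idel--Wolf normal form; the paper reaches it through the general co-isometry bound of Remark \ref{rem:coisometries}, which for unitary $U$ reduces to exactly your computation.

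Your unjustified aside that a column-sum normalization would also suffice is true: for unitary $V$, $V^*\mathbf{1}=\mathbf{1}$ implies $V\mathbf{1}=VV^*\mathbf{1}=\mathbf{1}$.
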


\begin{proof} The  lower bound follows from Remark \ref{rem:elementary_cdisc_bounds}, and the matching upper bound follows from Remark \ref{rem:coisometries}.  
   
\end{proof}
Another straightforward calculation in the  Appendix shows that  to resolve the CK problem, it is sufficient to resolve  the rank-2 Koml\'os problem  for real matrices.
\begin{proposition} \label{eq:complex_vector_equivalence} For the sets of real and complex Koml\'os matrices  $\mathcal A(m,n)$ and  $\mathcal C(m,n)$, defined above, let 
\[
K_2 (m,n):=\max_{A \in \mathcal A(m,n)} \text{Vdisc}_2(A). 
\]
Then, we have
\begin{align*}
K_2(m,n) \leq \max_{C \in \mathcal C(m,n)} \text{cdisc}(C)\leq 2 K_2(2m,n).
\end{align*}
\end {proposition}

Our main result is the following theorem  that resolves the rank-2     Koml\'os problem; it follows from Theorem \ref{thm:Komlos_bound} and Remark \ref{rem:constants}.  Our main conceptual contribution lies in the framing of this  problem within the formalism of Burkholder, which substantially expands the reach of ideas underlying the Bellman function method beyond its original uses in probability and harmonic analysis (see Sections \ref{sec:Bellman} and \ref{sec:Bellman_adaptation} for details about this method and its adaptation to the present problem).

\begin{theorem}
\label{thm:rank_2_Komlos}
If $A$ is a real Koml\'os matrix in $\mathcal A(m,n)$, then 
\begin{align*}
\text{Vdisc}_2(A) \leq 46\sqrt {2}. 
\end{align*}  
\end{theorem}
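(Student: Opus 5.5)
We will construct the unit-modulus vector $z\in\T^n$ by a continuous-time random walk on the torus in the spirit of Bansal and Jiang, with a Bellman/Burkholder potential controlling every row simultaneously. By \eqref{eq:real_complex_identification} it suffices to find $z\in\T^n$ with $|\sum_j A_{ij}z_j|\le K$ for every row $i$.

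Start from $z_j(0)=0$, regarded as a point of the closed unit disk, and run a martingale $z(t)\in\overline{\mathbb D}^n$. The increments are $dz_j = i z_j\,dB$-type rotations on coordinates that have already reached modulus one; these preserve the modulus. Coordinates still in the interior of the disk (``alive'') instead move in a direction chosen from a subspace, exactly as in Lovett--Meka or Bansal--Jiang partial colouring. The row sums $w_i(t)=\sum_j A_{ij}z_j(t)\in\C$ are then complex martingales.

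To control $\max_i|w_i|$ we introduce a potential $\Phi(t)=\sum_i V(|w_i(t)|^2, s_i(t))$. Here $s_i(t)=\sum_{j\text{ alive}}A_{ij}^2(1-|z_j(t)|^2)$ is the row's remaining ``variance budget''. Since the columns lie in the unit ball, $\sum_i s_i\le n$ initially. The function $V$ is a Bellman function that must satisfy two requirements.
\begin{itemize}
\item It is convex in the complex variable $w$.
\item It is decreasing along the admissible joint moves of $(w,s)$, with $d\langle w\rangle \le -ds$ up to constants, so that It\^o's formula gives $d\,\E\Phi\le 0$ once we restrict the update direction to a subspace of codimension at most half the number of alive coordinates.
\end{itemize}
That subspace is chosen orthogonal to the gradients of the rows currently heaviest in $\Phi$. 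The complex structure helps in two ways: rotational freedom of a modulus-one coordinate costs no budget, and the isotropy of complex Brownian increments lets $V$ depend only on $|w|^2$. The latter yields a Burkholder-type supersolution such as $V(x,s)=\exp(\lambda x-c\lambda s)$ restricted to the region $x\le K^2$.

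The key steps, in order, are as follows.

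1. Write down the explicit Bellman supersolution $V$ and verify the differential inequality $\partial_x V\,|dw|^2/2+\partial_s V\,ds\le 0$ under the constraint $\E|dw_i|^2\le -ds_i$. This is a two-variable computation.

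2. Show that at each time the alive coordinates admit an update direction, orthogonal to the few dangerous rows, with linear dimension count, such that every row's quadratic variation is bounded by its budget decrease. This uses that a rank-2 (complex) increment can be split between real and imaginary parts, so a subspace of half dimension suffices.

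3. Run the walk until all coordinates hit $\T$ and use optional stopping: $\E\Phi(\tau)\le\Phi(0)\le m\,V(0,1)$. By the Koml\'os normalisation each $s_i(0)\le \sum_j A_{ij}^2$ and the sum over rows is at most $n$, so the initial potential is bounded by a constant times the number of rows.

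4. Conclude that some realization has every $|w_i|\le K$ with explicit $K$. The constants are tracked to give $46\sqrt2$, where the $\sqrt2$ comes from converting the complex bound via Proposition \ref{eq:complex_vector_equivalence}.

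The main obstacle is step 2 combined with the union over $m$ rows without losing $\sqrt{\log m}$. A naive exponential potential reproduces Banaszczyk's bound. To obtain $O(1)$, the potential must exploit that rows with small remaining budget $s_i$ cannot grow much. The plan is therefore to choose $V$ to vanish identically when $|w|^2+C s\le K^2$, a ``safe region'' obstacle-type Bellman function. Then only rows with large budget contribute to $\Phi$, and there are at most $O(n/K^2)$ such rows. Consequently the orthogonality constraints in step 2 consume only a constant fraction of the alive coordinates. I would try this obstacle function first, mirroring Bansal--Jiang's ``blocking'' of heavy rows, and fall back on a multiscale potential if the convexity check in step 1 fails near the boundary of the safe region.
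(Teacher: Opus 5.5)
This is a plan rather than a proof, and it breaks at the step you flag yourself: nothing in it removes the union bound over rows.

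\textbf{The union bound is not removed.} Your only mechanism for controlling all rows at once is $\E\Phi(\tau)\le\Phi(0)$ with $\Phi=\sum_iV(|w_i|^2,s_i)$. That certifies $\max_i|w_i|\le K$ only if a single bad row outweighs $\Phi(0)$, so the scale of $V$ must depend on $m$, and you land at the Banaszczyk-type bound you mention. Also, $\Phi(0)\le mV(0,1)$ presumes $s_i(0)\le 1$, whereas a row of a Koml\'os matrix can have $\sum_jA_{ij}^2$ as large as $n$.

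The obstacle variant does not escape this. If rows may leave the safe region, you are back to an expectation bound. If they may not, ``stay in the safe region'' is a hard constraint, maintained by moving tangentially to every row on the boundary $|w_i|^2+Cs_i=K^2$. The argument then rests on showing that the number of such tight rows is a small fraction of the free directions. Your count of rows with large remaining budget, $O(n/K^2)$, does not bound this, since a row can be tight with arbitrarily small $s_i$.

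In the paper, this count is exactly the job of the potential. Because $\gamma_I=(\alpha/q_i)e^{\lambda z_I/\sqrt{q_i}}$ equals $\alpha/q_i$ on the barrier $z_I=0$, every tight index contributes $\alpha/4$ to $\sum_{j\in\mathcal V}L_j$, regardless of the size of its row. Heavy rows ($q_i>\alpha$) are frozen at $d_I=0$ and contribute at least as much. Hence $\max_jL_j\le 1+\delta$ gives $|\mathcal Q(x)|\le 4s(1+\delta)/\alpha$ (Lemma~\ref{lem:no_active}). Since $L_j$ is a per-column quantity with $L_j^{[n]}(0)\le\sum_iA_{ij}^2\le 1$, the number of rows never enters. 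The row bounds themselves come from hard admissibility constraints, not from an expectation.

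\textbf{Where rank $2$ enters.} You are also missing how $\max_jL_j\le 1+\delta$ is secured. In the paper it holds at a global minimiser of $\mathcal J$, by comparison with $x=0$. The minimiser is then ruled out from having active coordinates by a minimum principle, and this is where rank $2$ is used.

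The softmax over columns has a Hessian term $\tau^{-1}DL^\top(\mathrm{diag}\,\varphi-\varphi\varphi^\top)DL$, which vanishes exactly on the subspace $Y_1$ of directions along which all $L_j$ have the same derivative. Equalising costs $s-1$ of the $2s$ real dimensions of the active disks, and tight constraints cost at most $4s(1+\delta)/\alpha$. The doubled dimension is what still leaves $\dim Y\ge 1+\rho s$. On $Y$, the aggregate trace bound $\tr K_{jj}\le\rho d_j$ (Lemmas~\ref{eq:weighted_hessian_decomposition} and~\ref{lem:block trace}) gives a direction of negative curvature of $\mathcal J_{\mathcal V}$. The contraction lemma, tie-breaking rule and minimum principle then force $\mathcal V=\emptyset$.

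Your step 2 (``split between real and imaginary parts'') does not identify this count. Its per-row claim, that each row's quadratic variation is dominated by its own budget decrease, fails in general when the covariance is just the projection onto the constraint subspace. You would need an SDP-type covariance with $\Sigma\preceq c\,\mathrm{diag}(\Sigma)$, which you neither state nor prove.

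\textbf{Smaller errors.}
\begin{enumerate}
\item Your sample $V(x,s)=\exp(\lambda x-c\lambda s)$ is decreasing in $s$. Since $ds\le 0$, every term of its It\^o drift is nonnegative, so it is a subsolution, not a supersolution.
\item Rotating coordinates already on $\T$ adds $A_{ij}^2|dz_j|^2$ to $d\langle w_i\rangle$ with no decrease of $s_i$. This contradicts your own step 1 constraint; the paper keeps inactive vectors fixed.
\item The $\sqrt2$ does not come from Proposition~\ref{eq:complex_vector_equivalence}. That result only passes from real rank-$2$ bounds to complex matrices, at a cost of a factor $2$; for real $A$ one has $\mathrm{cdisc}(A)=\mathrm{Vdisc}_2(A)$ exactly. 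The $\sqrt2$ comes from bounding both coordinates of $\sum_ja_i(j)x_j$ by $C=46$ via the four directions $\pm e_1,\pm e_2$. The value $46$ works because $46>(2\beta+2/\lambda)\sqrt\alpha=45.5$ for the paper's constants.
\end{enumerate}
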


\begin{remark}
By Eq. \eqref{eq:hierarchy_relaxations}, the preceding theorem also proves  the Koml\'os conjecture for the Gaussian discrepancy.
\end{remark} 

The following corollary follows from Theorem \ref{thm:rank_2_Komlos} and Proposition   \ref{eq:complex_vector_equivalence}.

\begin{corollary}
\label{cor:cdis_Komlos}
If $C$ is a complex Koml\'os matrix in $\mathcal C(m,n)$, 
\begin{align*}
\text{cdisc}(C) \leq 92\sqrt {2}. 
\end{align*}  
\end{corollary}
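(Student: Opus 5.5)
The corollary should follow by chaining Proposition \ref{eq:complex_vector_equivalence} with Theorem \ref{thm:rank_2_Komlos}. Fix $C \in \mathcal C(m,n)$. By the upper bound in Proposition \ref{eq:complex_vector_equivalence},
\[
\text{cdisc}(C) \le \max_{C' \in \mathcal C(m,n)} \text{cdisc}(C') \le 2K_2(2m,n) = 2\max_{A \in \mathcal A(2m,n)} \text{Vdisc}_2(A).
\]
Theorem \ref{thm:rank_2_Komlos} holds for real Koml\'os matrices of arbitrary dimensions, in particular for every $A \in \mathcal A(2m,n)$. It therefore gives $K_2(2m,n) \le 46\sqrt2$. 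Hence $\text{cdisc}(C) \le 92\sqrt2$.

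The argument has only one real dependency: the factor $2$ and the doubling of the row dimension $m \mapsto 2m$ in Proposition \ref{eq:complex_vector_equivalence}. Since the proposition is stated earlier, I may assume it. Still, let me record how I expect it to go, since that is where any content lies.

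Write $C = P + iQ$ with $P, Q$ real, and stack them into the real $2m \times n$ matrix $A = \begin{pmatrix} P \\ Q\end{pmatrix}$. Each column of $A$ has Euclidean norm equal to that of the corresponding column of $C$, so $A \in \mathcal A(2m,n)$.

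For $z \in \T^n$ we have
\[
(Cz)_k = (Pz)_k + i(Qz)_k, \qquad\text{so}\qquad |(Cz)_k| \le |(Pz)_k| + |(Qz)_k|.
\]
Via the identification \eqref{eq:real_complex_identification}, $|(Pz)_k|$ and $|(Qz)_k|$ are rows of $A$ applied to the unit vectors $u_j \in \mathbb S^1$. Choosing $z$ optimal for $\text{Vdisc}_2(A) = \text{cdisc}(A)$ (see \eqref{eq:cdisc_vdisc2}) then yields $\text{cdisc}(C) \le 2\,\text{Vdisc}_2(A)$.

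No step is a genuine obstacle; the constant $92\sqrt2$ is exactly $2 \cdot 46\sqrt2$.
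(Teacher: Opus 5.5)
Your proposal is correct and follows the paper's route: the paper obtains the corollary by combining the upper bound $\text{cdisc}(C)\le 2K_2(2m,n)$ from Proposition \ref{eq:complex_vector_equivalence} with Theorem \ref{thm:rank_2_Komlos}, applied in dimension $2m$. Your sketch of the proposition, which stacks the real and imaginary parts into a matrix in $\mathcal A(2m,n)$ and uses the triangle inequality together with the identification \eqref{eq:real_complex_identification}, is the same argument the paper gives in the Appendix.
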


\begin{remark}
Adjustments to the proof of  Theorem \ref{thm:Komlos_bound}  and Remark \ref{rem:constants} could be introduced to optimize the constant  in Theorem \ref{thm:rank_2_Komlos}, and eliminate the gap between that constant and the one in  Corollary \ref{cor:cdis_Komlos}. For the sake of simplicity, we do not make these adjustments in this work.  
\end{remark}

\subsection{Notation} 
\label{sec:notation} For a vector $x$ in $\C^m$,  $\|x \|_p$ denotes its $\ell_p$ norm, and  $|x |:=\|x\|_2$ specifically denotes the Euclidean ($\ell_2$) norm.  $\T^n$ denotes the $n$-torus in $\C^n$, i.e., the set of all vectors in $\C^n$ with each component having modulus one:
\[
\T^n:= \{(e^{i\phi_1}, \dots, e^{i\phi_n}) \in \C^n ~|~ \phi_j \in \R \}.
\]
For any positive semidefinite Hermitian matrix $H \in \C^{m\times m}$, its square root  $H^{\frac{1}{2}} := U \Lambda^{\frac{1}{2}} U^*$ where the unitary matrix $U$ and the diagonal matrix $\Lambda$ are given by the spectral decomposition $H =: U \Lambda U^* $, and $\Lambda^{\frac{1}{2}}$ is given by the entrywise square root of $\Lambda$. Also for any  matrix $C \in \C^{m\times n}$,  $\sigma_j(C) \geq 0$ denotes its $j$-th largest singular value; we may sometimes refer to the smallest singular value by $\sigma_{\min}(C)$.  

 We denote the set of unitary matrices in $\C^{m \times m}$ by $U(m)$. When $d$ is a vector in $\C^m$ or $\R^m$, $\text{diag}(d)$ refers to a diagonal matrix $D$ in $\C^{m \times m}$ or $\R^{m \times m}$, respectively, with  $D_{ii} = d_i$. When $D$ is a matrix in $\C^{m \times m}$ or $\R^{m \times m}$, $\text{diag}(D)$ refers to a vector $d \in \C^{m}$ or $\R^{m}$, respectively, with $d_i = D_{ii}$.   For vectors $x$ and $y$ in $\C^m$ or $\R^m$, $(x, y) $ denotes their Euclidean inner product; we may include the underlying space in  the subscript whenever it may be useful to emphasize it.  For a matrix $C \in \C^{m  \times m}$,  its operator norm is denoted by $\|C\|$. $I_m$ denotes the identity matrix in $\C^{m\times m}$ or $\R^{m\times m}$.  
 
 The vector $e_j$ represents the $j$-th canonical basis vector of $\R^m$. For a finite set $\mathcal V$ of size $s$, and  vector $(d_j)_{j \in \mathcal V} \in \R^s$,  $\text{diag}_{j \in \mathcal V}(d_j I_2)$  denotes the diagonal matrix in $\R^{2s\times 2s}$ whose $j$-th principal diagonal block is $d_j I_2$.  For two sets $A$ and $B$,  $A\sqcup B$ denotes their disjoint union.

\subsection{Haar system in the $L^p$ spaces and the Bellman function method}  
\label{sec:Bellman}

Our paper builds upon the recent work of Bansal and Jiang on the Beck-Fiala and Koml\'os conjectures, mentioned above, which we approach from the formalism of Burkholder and the Bellman function method from probability and harmonic analysis.  Our description  of this method is based primarily on the lecture notes of Burkholder, as well as the more recent  survey paper and monograph by Os\k{e}kowski \cite{Burkholder89, osekowski2013, osekowski}. 

Let us consider a sequence $( e_k)_{k\geq 0}$ of functions in the real Lebesgue space $L^p$.     The \emph {unconditional constant} $\beta_p(e)$ is the smallest  $\beta$ that satisfies
\begin{align*}
\Big \|\sum_{k=0}^n \eps_k a_k e_k \Big \|_{L^p} \leq \beta 
\end{align*}
uniformly for all sets of real numbers $a_0, \dots, a_{n}$ satisfying $\|\sum_{k=0}^{n} a_k e_k\|_{L^p} =1$ and all choices of  $\eps_k \in \{\pm 1\}$.  Note that this  is  an \emph{(un-)balancing problem} in  $L^p$  where all $a_k$'s and $\eps_k$'s are chosen to maximize the left hand side of the preceding inequality.\footnote{As an example of  another infinite dimensional balancing problem closer to our setting, we remark that Barthe, Gu{\'e}don, Mendelson and  Naor  considered an analogue of the Koml\'os conjecture in the space of the $\ell_\infty$ sequences \cite{BGMN2005}.}

 For $1 \leq p < \infty$,  the standard Haar system $( h_k)_{k\geq 0}$ on the interval $[0,1)$ forms  a basis of $L^p([0,1])$ endowed with Lebesgue measure, and for  $1 <p < \infty$  the Paley-Marcinkiewicz theorem established   the existence of an unconditional constant for this basis \cite{Paley32, Marcinkiewicz1937, Gaposhkin74}. A sequence $d = (d_0, d_1, \dots)$ in $L^p$ is a \emph{martingale difference sequence}  if $d_{n}$ is orthogonal to $\varphi (d_0, \dots, d_{n-1})$  for all bounded functions $\varphi: \R^{n} \rightarrow \R$. %
 
Let $p^* = \max (p,q)$ where $1 <p <\infty$ and $1/p+1/q= 1.$
In his seminal paper \cite{burkholder84}, Burkholder showed that for any sequence of numbers $\eps = (\eps_0, \eps_1   , \dots )$ in $[-1,1]$ and any positive integer $n$, 
\begin{align}
\Big\|\sum_{k=0}^n \eps_k d_k\Big \|_{L^p} \leq (p^*-1) \Big \|\sum_{k=0}^n  d_k \Big \|_{L^p} \label{eq: lp_estimate}
\end{align}
where $(p^*-1)$ is the smallest possible constant.  The Haar system $h$ forms a martingale difference sequence since, for $n \geq 1$, $h_{n}$ has zero mean and is supported  on a set where $\varphi (h_0, \dots, h_{n-1})$ is constant. Therefore,  $\beta_p(h) \leq (p^*-1)$, and in fact this inequality holds with equality \cite{Burkholder89}. 

Burkholder's original proof rests in part  on identifying an explicit solution of a certain nonlinear partial differential equation satisfying suitable boundary  conditions on a domain in $\R^2$ \cite{burkholder84}  (see also  \cite{burkholder1982}). The modern treatment of Burkholder's technique that became  known as the \emph{Bellman function method} or  \emph{Burkholder method} appeared later \cite{Burkholder89}.  We define a real martingale  $f= (f_0, f_1,\dots)$ in $L^p$ and its transform  $g= (g_0, g_1,\dots)$   by
\[
g_{k+1}-g_{k} =  \eps_k (f_{k+1}-f_{k})
\]  
where $(\eps_0,\eps_1, \dots )$ is a predictable sequence of signs $\{\pm 1\}$, and $|g_0| \leq |f_0|$. Suppose that a function  $U:\R^2 \to \R$: $(1^\circ)$ majorizes
\[
V(x,y) := |y|^p -\beta_p^p |x|^p,
\]
 $(2^\circ)$ is concave along $t \rightarrow (x+t, y+\eps t)$, and $(3^\circ)$ satisfies $U\leq 0$ whenever $|y| \leq |x|$. Then, the majorization and Jensen's inequality give
\[
\E V(f_n,g_n) \leq \E U(f_n,g_n) \leq \E U(f_0,g_0) \leq 0.
\]
 Functions that satisfy  $(1^\circ)$  - $(3^\circ)$ are called \emph{Burkholder functions}; the least such function is the \emph{Bellman function} of the problem.

There are two main methods used to find the Bellman function in closed form.   The first one is a modern restatement of Burkholder's original proof \cite{osekowski2013, osekowski2015}. This method identifies the least diagonally concave majorant with   a particular $p$-homogeneous  function $U$ satisfying $U(tx,ty) = t^p U(x, y)$ for $t>0$ and the required properties $(1^\circ)$  - $(3^\circ)$.  Analysis of those properties,  together with rescaling and  minimality of the function, leads to an explicit expression for $U$, yielding in the setting of our example the upper bound \eqref{eq: lp_estimate}. This analysis resembles rescaling arguments in PDEs.

\textbf{Burkholder method and Monge-Amp\`ere equation.} The second method  to find the Bellman function entails solving  a free boundary problem for  homogeneous Monge-Amp\`ere equations \cite{VV10} (see also \cite{NT96, SSV08}). In this setting, restricted concavity can be expressed via Hessian inequalities, and degenerate Monge-Amp\`ere equations can be used to construct candidate solutions. These connections motivate our use of Hessian inequalities, even though our proof is a direct variational argument -- we do not solve a Monge–Amp\`ere equation or establish that our discrepancy potential is an optimal Bellman function. 

\textbf{Burkholder method and sufficient statistics.} Foster, Rakhlin and Sridharan used the Burkholder method to  prove adaptive bounds for online supervised learning problems, such as the online matrix completion  \cite{foster17, foster} (see also \cite{rakhlin_burkholder_lecture}). Consider a set $(x_1, y_1), \dots, (x_n, y_n)$ of training data where each $x_t$ and $y_t$ belong to the input and output spaces,  $\mathcal X$ and $\mathcal Y$, respectively. Online supervised learning is a multiperiod decision problem where the learner receives the training data sequentially  from the data-generating process  and  in each period $t =1, \dots, n$  makes a prediction $\hat y_t$ of $y_t$ based on $x_t$ and $(x_1, y_1), \dots, (x_{t-1}, y_{t-1})$. (This contrasts the classic, offline learning where all the training data is revealed before the learner makes the first prediction.)  For a given loss function $l(\hat y_t, y_t)$,  and a  map $f$ from $\mathcal X$ to $\mathcal Y$ that serves as an evaluation benchmark, the learner's goal is to minimize the  \emph{regret} $R_n$, given by the difference between the learner's and the benchmark's cumulative losses.  The Burkholder method  provides
\[
R_n(f) \leq  \Phi(f, x_1, \dots, x_n)  
\]
where $\Phi: \mathcal F \times \mathcal X^n \to \R$  gives an adaptive, data-dependent upper bound. Since this bound depends on the inputs, it can be meaningful whether the data is  predetermined, sampled from a distribution or chosen adversarially.  The  bound is achieved by an adaptation of the Bellman function method: the authors use a suitable final value function  $V(\tau)$ and a Burkholder function $U(\tau)$, which  satisfies properties analogous to $1^\circ$ -$3^\circ$ and depends on the \emph {sufficient statistic} $\tau$ -   a compressed representation of the inputs $x_1, x_2, \dots $, predictions $\hat y_1, \hat y_2, \dots $, and ground truth outputs  $ y_1,  y_2, \dots$.

\subsection{Previous work on complex  discrepancy} 
\label{sec:previous_work}

While we did not identify  previous work specifically on the Koml\'os conjecture for complex discrepancy,\footnote{We did identify a question about this conjecture on MathOverflow \cite{mathoverflow_CK}.} there is  recent work on the complex analogue of Spencer's ``six standard deviations'' theorem.

   First, let us describe the original real-valued Spencer problem: it entails coloring the  elements of a set $U$ of size $n$ with binary colors $\{\pm 1\}^n$  so that  each member of a given collection  of $m$ subsets of $U$ is colored in the most balanced way possible. This collection can be represented by the incidence matrix  $A \in \{0,1\}^{m\times n}$, and the balance of coloring is measured by $\text{disc}(A)$. The \emph{set coloring problem} considers a closely related class of matrices: $A \in \R^{m \times n }$ and all $|A_{ij}|\leq 1$.  Spencer established that when $m\geq n$, \[
\text{disc (A)} =O \big(\sqrt {n \log( 2m /n) }\big) 
\]by a partial coloring method using the binary entropy potential \cite {Spe85}. This result was also obtained by  Gluskin  using a different method also based on partial coloring \cite{Glushkin89}, and Giannopoulos provided a simplified version of Gluskin's proof  \cite{Giannopoulos1997}.  When $n \geq m$,  this  upper bound can be taken to be   $5.32 \sqrt {m}$,  which led to its  ``six standard deviations suffice" characterization. (Pesenti and Vladu \cite{PesentiVladu2023} improved the constant prefactor in the set discrepancy setting from 5.32 to 4.1.) A matching $\Omega (\sqrt{m})$ lower bound in this setting is achieved by Hadamard matrices \cite{Chazelle2000, AS2000}.

Bansal developed the first polynomial-time  algorithm  that achieves Spencer's upper bound using  partial coloring by a random walk with covariance determined by semidefinite programming \cite{Bansal10}. Subsequently, Lovett and Meka developed a simpler algorithm  that achieved the same upper bound using  a random walk inside the intersection of the unit hypercube with a polytope in $\R^n$ encoding certain discrepancy constraints  \cite{LM15}. 

Kobos and Varivoda considered the complex discrepancy of Spencer matrices $\mathcal {P}(n)$, i.e., square matrices $C$ in $\C^{n \times n}$ with all $|C_{ij}| \leq 1$ \cite{kv26}, and  conjectured that  for all $n \geq 1$
\begin{align*}
\max_{C\in \mathcal  {P}(n)} \text{cdisc}(C) \leq {\sqrt {n}} 
\end{align*}
proving this conjecture for $n=2$ and 3. They also described a number of famous  problems in the geometry of complex vectors that, like the complex Koml\'os conjecture, present a sharp contrast with the real case, where the answers are far less clear.  

Finding a \emph{complex unconditional constant} of a sequence in the complex  $L^p_{\C}$ space is another   (un)balancing problem in the complex domain. 
For sequence $( e_k)_{k\geq 0}$ of functions in the complex Lebesgue space $L_{\C}^p$,     the complex unconditional constant $\beta_p(e, \C)$ is the smallest  $\beta$ that satisfies
\begin{align*}
\Big \|\sum_{k=0}^n e^{i\phi_k} c_k e_k \Big \|_{L_{\C}^p} \leq \beta 
\end{align*}
uniformly for all sets of complex numbers $c_0, \dots, c_n$ satisfying $\|\sum_{k=0}^n c_k e_k\|_{L_{\C}^p} =1$ and all choices of  $\phi_k \in \R$.  Pe\l{}czy\'{n}ski conjectured that the complex and real unconditional constants of the Haar basis are equal; this conjecture was proven by Burkholder \cite{Pelczynski85, Burkholder88} (see also \cite{Burkholder89}).

Finally, let us observe that the breakthrough solution of the Kadison-Singer problem is premised on its characterization as a \emph{matrix} discrepancy problem over complex Hermitian rank-1 matrices (albeit   here there is a finite number of partitions, in contrast to the continuous ``colorings'' in  complex discrepancy) \cite{Weaver04, MSS15}. Interestingly, however, the Kadison-Singer proof involved the analysis of roots of hyperbolic polynomials, which generalize the determinant and have their origin in the PDE theory \cite{hormander1983}.

\subsection {Open problems}  Our work leads to several open problems, which we will describe in this section.  First, under the Koml\'os column normalization, can bounded rank-2 vector discrepancy be converted into a dimension-free bound on the classic discrepancy?

Second, for any matrix $C \in \C^{m \times n}$, one could define the $k$-th \emph{root-of-unity} discrepancy 
\begin{align*}
\text{Udisc}_k(C) := \min_{z \in (\mu_k)^n} \|Cz\|_\infty
\end{align*}
 where $\mu_k $ is the set of the $k$-th roots of unity:  $\mu_k:= \{1,e^{2\pi i/ k},\dots, e^{2\pi i  (k-1)/ k}\}$. Then, for any real matrix $A$, we have  
 \[
 \text{Udisc}_2(A) = \text{disc}(A), 
 \]
 and for each fixed complex matrix $C$
 \[
 \lim_{k \rightarrow \infty} \text{Udisc}_k(C) = \text{cdisc}(C). 
 \]
 Since $\text{Udisc}_k$ is a relaxation of the classic discrepancy for even $k$, proving the Koml\'os conjecture for $\text{Udisc}_k$  with any even  $k \geq 6$, is a prerequisite for proving the classic  version of this conjecture (for real matrices, $\text{Udisc}_4$ is equivalent to the classic discrepancy modulo a uniform constant  prefactor). A related open problem is then to prove that the Koml\'os conjecture for $\text{Udisc}_k$  with any even  $k \geq 6$, implies the classic Koml\'os conjecture.  

 Third, it would be interesting to understand whether the methods of this paper could help determine the constant prefactor in the complex discrepancy of Spencer matrices,  shed light on other problems in the geometry of complex vectors and/or help resolve any conjectures about the Banach-Mazur distance discussed in \cite{kv26}.

 Lastly, although the potential function and its state variables (i.e., the sufficient statistics) used in this work are not guaranteed to be optimal, they provide an upper bound on the optimal discrepancy. Therefore, given the PDE connections between the Burkholder method and online learning discussed above, as well as recent work at the intersection of discrepancy, online learning and PDEs, it may be fruitful to characterize this  potential as a supersolution to the Monge-Amp\`ere equation or another suitable PDE \cite{kobzar, kobzar_geom, kobzar22, kobzar26, GK26,FJLN26}. It would be also interesting to identify the optimal statistics and potential for the complex or rank-2 discrepancy problem, similarly to the corresponding identification in the context of the Burkholder method \cite{NT96, SSV08,VV08, IV15,Ivanisvili15}. 
 
 The next section addresses the complex discrepancy of unitary matrices,  Section \ref{s:heuristics and overview of the proof}  connects our methods to the recent work of Bansal and Jiang and contextualizes them within the formalism of Burkholder and the Bellman function method. Finally, Section \ref{sec:rank_2_core_proof} contains  the core parts of the proof of the rank-2 Koml\'os conjecture, and the appendices set forth several proofs that were omitted from the main part of the paper. 

 \section {Complex discrepancy of unitary matrices}
In this section, we state the  results of this paper pertaining to the complex discrepancy of unitary matrices in full generality; these results are used to obtain Theorem \ref{thm:unitary} discussed previously. 

\subsection{Optimality of a trivial lower bound on complex discrepancy}

We will describe a trivial lower bound on complex discrepancy, and characterize our main result as the  optimality of this lower bound for unitary matrices. 
 \begin{remark} \label{rem:elementary_cdisc_bounds}
For any  matrix $C \in \C^{m \times m}$, 
\[
\sigma_{\min}(C) \leq \text{cdisc}(C)  
\]
and for any  Koml\'os matrix $C \in \mathcal C(m,m)$
\[
\sigma_{\min}(C) \leq 1.
\]
\end{remark}
\begin{proof}
For any $z \in \T^m$, 
\[
|C z| \geq \sigma_{\min}(C) |z| =  \sigma_{\min}(C)\sqrt {m},
\]
and therefore 
\[
\|C z\|_{\infty} \geq  \frac{|C z|}{\sqrt{m}}  \geq \sigma_{\min}(C).
\]
Also, by Hadamard's inequality, for any square matrix, $C \in \C^{m \times m}$,
\[
\sigma_{\min}(C) \leq |\det (C)|^{\frac{1}{m}} \leq \Big(\prod_{j \in [m]} |C_{:j}|\Big)^{\frac{1}{m}}, 
\]
and for any Koml\'os matrix, the right hand side of the above inequality is upper bounded by 1.
\end{proof}

It is also straightforward to show that this trivial lower bound is optimal for \emph{circulant} matrices, and the eigenvectors of unitary circulant matrices can be chosen \emph{biunimodular} \cite{FR15}.
\begin{remark} \label{rem:circulant unitaries}
For any circulant matrix  $C \in \C^{m \times m}$, there exists an  eigenvector $z$  in $\T^m$, and
\[
\text{cdisc}(C) =\sigma_{\min}(C).  
\]
 If $C$ is also unitary, then $Cz \in \T^m$ for such  eigenvector $z$, and therefore
 \[
\text{cdisc}(C) =1.
\]
\end{remark}
\begin{proof}
It is a standard result that any circulant matrix $C \in \C^{m \times m}$ is diagonalized by a discrete Fourier transform matrix $F$ given by $F_{jk}:= \exp( 2\pi i (j-1)(k-1)/m)$; observe that the rows and columns of $F$ belong to $\T^m$. We have,   
\[
CF  = F \Lambda
\]
where  $\Lambda$ is a diagonal matrix of eigenvalues.  Therefore, taking $z$ to be the column of $F$ corresponding to the  eigenvalue  with the smallest modulus guarantees that $\text{cdisc}(C) = \sigma_{\min}(C)$. When $C$ is unitary, it follows immediately that this quantity is 1.  
\end{proof}

The following generalization of the Sinkhorn normal form for unitary matrices by Idel and Wolf guarantees the existence of a biunimodular vector for \emph{any} unitary matrix \cite{IW15}. 
\begin{theorem}\label{thm:sinkhorn}
For every unitary matrix $U \in \C^{m \times m}$, there exist two diagonal unitary matrices $D_1$ and $D_2$ in $\C^{m \times m}$ such that $W: =D_1UD_2 $ satisfies $\sum_i W_{ij}=1$ and $\sum_j W_{ij}=1$ for all $i, j \in [m]$. 
\end{theorem}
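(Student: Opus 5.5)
The plan has two parts. First we reduce the statement to finding a \emph{biunimodular vector} for $U$. Then we prove that such a vector exists, which is the real content.

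\textbf{Reduction.} Suppose there is $z\in\T^m$ with $w:=Uz\in\T^m$. Set $D_2:=\mathrm{diag}(z)$ and $D_1:=\mathrm{diag}(\bar w)$; both are diagonal unitaries. Then $W=D_1UD_2$ is unitary, and writing $\mathbf 1$ for the all-ones vector,
\[
W\mathbf 1=D_1Uz=\mathrm{diag}(\bar w)\,w=\mathbf 1,
\]
since $|w_i|=1$. This is exactly $\sum_j W_{ij}=1$ for every $i$. For the column sums we use unitarity: from $W\mathbf 1=\mathbf 1$ we get $W^*\mathbf 1=W^*W\mathbf 1=\mathbf 1$, that is, $\sum_i \overline{W_{ij}}=1$. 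Conjugating gives $\sum_i W_{ij}=1$ for every $j$. So it suffices to show that $U(\T^m)\cap\T^m\neq\emptyset$ for every $U\in U(m)$.

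\textbf{Existence of a biunimodular vector.} We projectivize. The torus $\T^m$ descends to the Clifford torus $L=\{[z]:|z_1|=\dots=|z_m|\}\subset\C P^{m-1}$, which is a Lagrangian submanifold for the Fubini--Study form. The unitary $U$ acts on $\C P^{m-1}$ as a symplectomorphism. Because $U(m)$ is connected and its Lie algebra acts by Hamiltonian vector fields (with moment map $[z]\mapsto \langle z,Hz\rangle/|z|^2$), $U$ is Hamiltonian isotopic to the identity. We would then invoke the nondisplaceability of the Clifford torus, due to Cho and to Biran--Entov--Polterovich: its Floer homology is nonzero, so $\phi(L)\cap L\neq\emptyset$ for every Hamiltonian diffeomorphism $\phi$. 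This gives $[z]\in L$ with $[Uz]\in L$. Normalizing $z\in\T^m$, we get $|(Uz)_i|$ independent of $i$. Since $|Uz|=|z|=\sqrt m$, each $|(Uz)_i|=1$, as required.

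\textbf{Main obstacle.} The hard step is the intersection statement, which is genuinely deep; Idel and Wolf prove the theorem precisely by appeal to this symplectic rigidity. An elementary alternative I would try first is variational: maximize $\Phi(z)=\sum_i\log|(Uz)_i|$ over $z\in\T^m$. By concavity of $\log$, $\Phi(z)\le \tfrac m2\log\big(|Uz|^2/m\big)=0$, with equality if and only if every $|(Uz)_i|=1$. So one needs to show that $\sup\Phi=0$. The first-order conditions at a maximizer say that $z_j\overline{(U^*v)_j}$ is real for all $j$, where $v_i=1/\overline{(Uz)_i}$. These conditions alone do not force equal moduli. Closing that gap would essentially require a degree or Morse-theoretic argument, and that is exactly what the Floer-theoretic result supplies, so I expect to rely on it.
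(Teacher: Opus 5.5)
Your proposal is correct and follows essentially the route the paper relies on. The paper does not reprove this theorem; it cites Idel and Wolf, whose nonconstructive proof is precisely your reformulation: a biunimodular vector for $U$ is the same as an intersection point of the Clifford torus in $\C P^{m-1}$ with its image under the Hamiltonian diffeomorphism induced by $U$, and that intersection is supplied by the nondisplaceability theorem of Cho and Biran--Entov--Polterovich. Your reduction step is simply the converse of the paper's derivation of Corollary~\ref{cor:torus_mapping}, so the two statements are equivalent, and the variational aside is not needed for the argument.
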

The proof of this theorem is nonconstructive and is based on a reformulation of the matrix decomposition problem in terms of symplectic topology. Using this theorem, we can decompose a unitary matrix $U$ as $D_1^* W D_2^*$ where $W$ is a unitary matrix with all row and column sums equal to 1.  Taking $z = D_2 \one$, we have 
\[
Uz = D_1^* W D_2^* D_2 \one = D_1^* W \one =D_1^* \one = \text{diag}(D_1^*)
\]
where the next to last equality holds by the fact that $W \one = \one$ and the vector $\text{diag}(D^*_1)$ lies in $\T^m$ because $D^*_1$ is a unitary diagonal matrix. This  establishes  the existence of a biunimodular vector of $U$.  
\begin{corollary} \label{cor:torus_mapping}
For every unitary matrix $U \in \C^{m \times m}$, there exists a vector $z\in\mathbb T^m$ such that $Uz\in\mathbb T^m$.
\end{corollary}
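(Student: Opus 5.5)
The plan is to read the corollary off Theorem \ref{thm:sinkhorn} by a direct computation, essentially as sketched in the paragraph just before the statement. Fix $U \in U(m)$. By Theorem \ref{thm:sinkhorn} there are diagonal unitary matrices $D_1, D_2$ such that $W := D_1 U D_2$ has every row sum and every column sum equal to $1$. In matrix form, the row-sum condition reads $W\one = \one$, where $\one \in \C^m$ is the all-ones vector; this is the only property of $W$ we use. Since $D_1$ and $D_2$ are unitary, we can invert the factorization as $U = D_1^* W D_2^*$.

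Set $z := D_2 \one$. Because $D_2$ is diagonal and unitary, each of its diagonal entries has modulus one. Hence $z = \text{diag}(D_2)$ lies in $\T^m$. We then compute
\[
Uz = D_1^* W D_2^* D_2 \one = D_1^* W \one = D_1^* \one = \text{diag}(D_1^*).
\]
The matrix $D_1^*$ is again diagonal unitary, so its diagonal entries are unimodular and $Uz \in \T^m$. This proves the corollary.

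There is no genuine obstacle here, since all the difficulty is contained in the nonconstructive Idel--Wolf theorem, which we take as given. The only points to check are three elementary facts. A diagonal matrix is unitary exactly when its diagonal entries have modulus one. The adjoint of a diagonal unitary matrix is again diagonal unitary. The condition "all row sums equal $1$" is the same as $W\one = \one$.

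As a consistency check, the conclusion gives the upper bound in Theorem \ref{thm:unitary}: $\|Uz\|_\infty = 1$ for this $z$, so $\text{cdisc}(U) \le 1$.
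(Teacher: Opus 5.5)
Your proposal is correct and is the paper's own argument. You take $z = D_2\one$ from the Idel--Wolf factorization $U = D_1^* W D_2^*$ and use $W\one = \one$ to obtain $Uz = \text{diag}(D_1^*) \in \T^m$; you are also right that only the row-sum condition on $W$ is needed.
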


 \begin{theorem}\label{t:main theorem}
For any $C \in \C^{m\times n}$ with $m \leq n$, we have
\begin{align*}
  \text{cdisc} (C) \leq \| (CC^*)^{\frac{1}{2}} \|_{\infty \to \infty}   
\end{align*}
where 
\[
\| M \|_{\infty \to \infty}:=\max_i \sum_{j}|M_{ij}|.
\]

\end{theorem}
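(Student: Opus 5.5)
Our plan is to reduce the statement to Corollary \ref{cor:torus_mapping} through a polar decomposition of $C$. Since $m \leq n$, we can write $C = (CC^*)^{\frac{1}{2}} V$, where $V \in \C^{m\times n}$ is a co-isometry, meaning $VV^* = I_m$. When $CC^*$ is invertible we take $V := (CC^*)^{-\frac{1}{2}} C$, and then $VV^* = (CC^*)^{-\frac12} CC^* (CC^*)^{-\frac12} = I_m$. In the singular case we use the standard polar decomposition $C = P V$, with $P = (CC^*)^{\frac12}$ and $V$ extended to a co-isometry by completing the partial isometry on the orthogonal complement of $\ker P$. This completion is possible precisely because $n \geq m$. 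Alternatively, we can apply the invertible case to $C + \delta [I_m\ 0]$ and pass to the limit $\delta \to 0$, using compactness of $\T^n$ and continuity of both sides.

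The key step is to show that every co-isometry $V$ admits some $z \in \T^n$ with $Vz \in \T^m$. This is presumably the content of Remark \ref{rem:coisometries}. To get it, we complete the $m$ orthonormal rows of $V$ to an $n\times n$ unitary matrix
\[
U = \begin{pmatrix} V \\ V' \end{pmatrix}.
\]
Corollary \ref{cor:torus_mapping} then gives $z \in \T^n$ with $Uz \in \T^n$. The first $m$ coordinates of $Uz$ are exactly $Vz$, so $Vz \in \T^m$.

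With this $z$, set $w := Vz \in \T^m$ and $M := (CC^*)^{\frac12}$. Then
\[
|(Cz)_i| = \Big|\sum_j M_{ij} w_j\Big| \leq \sum_j |M_{ij}|,
\]
so $\|Cz\|_\infty \leq \|M\|_{\infty\to\infty}$, and $\text{cdisc}(C)$ is bounded by this quantity.

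The main obstacle is the existence of biunimodular vectors, and the paper supplies it through the Idel--Wolf theorem (Theorem \ref{thm:sinkhorn}). Beyond that, the only delicate point is the rank-deficient case of the polar decomposition. There we must make sure the factor $V$ really has $VV^* = I_m$, not merely that it is a partial isometry, and either the explicit completion or the limiting argument above handles this.
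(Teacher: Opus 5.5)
Your proposal is correct and follows essentially the same route as the paper, which writes $C = PSJQ^*$ via the SVD and takes $B = PJQ^*$. This $B$ is exactly your co-isometric polar factor, and the paper likewise completes it to a unitary and applies Corollary~\ref{cor:torus_mapping}. The SVD construction gives $BB^* = I_m$ and $C = (CC^*)^{\frac12}B$ regardless of rank, so it avoids your split between invertible and singular $CC^*$; both of your fixes for the singular case are nonetheless valid.
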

\begin{proof}

We will  show that there exists $z \in \T^n$ and $w \in \T^m$ such that $Cz = Hw$ for 
\[
H:= (C C^*)^{\frac{1}{2}}.
\] 
Consider the singular value decomposition \[
C=: P S J Q^*
\]
where $P \in U(m)$, $S$ is a real diagonal $m \times m$ matrix with diagonal entries given by $(s_1, \dots, s_r, 0, \dots 0)$ where $s_i \in \R$, $s_i>0$ and $r$ is the rank of $C$, $J := [I_m 0] \in \R^{m \times n} $ and   $Q\in U(n)$. Using this decomposition, we define
\[
B:= P J Q^*.
\]
Note that the rows of $B$ are orthonormal since 
\[
BB^* =P J J^*P^* =I_m, 
\]
and therefore we can complete $B$ to an $n\times n$ unitary matrix $U$. Also, 
\[
H = (C C^*)^{\frac{1}{2}} = P SP^*.
\]
By Corollary \ref{cor:torus_mapping}, there exists $z \in \T^n$ such that 
\[
Uz \in \T^n.
\]
Since the first $m$ coordinates of $Uz$ are equal to $Bz$, we have  $w:=Bz \in \T^m$. Therefore,
\[
Cz = PSJQ^* z = PSP^* PJQ^* z=  (CC^*)^{\frac{1}{2}} Bz = Hw.
\]
We conclude the proof of the theorem by observing that
\[
\|Cz\|_\infty= \|Hw\|_\infty \leq \| H \|_{\infty \to \infty} \|w\|_\infty \leq \| H \|_{\infty \to \infty}. 
\]

\end{proof}

\begin{remark} \label{rem:coisometries} Scaled co-isometries or  tight frames are given by  matrices $C \in \C^{m\times n }$ with $m \leq n$ satisfying $CC^* = \lambda I_m$.  For such matrices, $\text{cdisc} (C) \leq \sqrt{\lambda}$.

\end{remark}
This remark  follows from the previous theorem and  shows that the trivial lower bound in Remark \ref{rem:elementary_cdisc_bounds} is optimal for unitary matrices.

\section{Context and overview of the proofs of the rank-2 Koml\'os conjecture} \label{s:heuristics and overview of the proof}

In this section, we will  connect our proofs of the rank-2 Koml\'os conjecture to the recent work of Bansal and Jiang and contextualize them within the formalism of Burkholder and the Bellman function method. 

\subsection{Adaptation of the Bansal-Jiang  state variables to rank-2 vector discrepancy}
\label{sec:BJ}

The starting point of our analysis of rank-2 vector discrepancy is an adaptation to this setting  of the state variables of the algorithm of Bansal and Jiang for the classic Beck-Fiala problem \cite{BJ2025}. While the random walk $x_t$ used in their algorithm evolves in the unit hypercube $[-1,1]^n$,  the corresponding  fractional coloring domain $\mathcal D$ in our rank-2 vector discrepancy problem is given by a set of vectors $x=(x_1, \dots, x_n) \in  (\mathbb{R}^{2})^n$, each with at most unit $\ell_2$ norm: 
 \begin{align}
 \mathcal D : = (B_2)^n~\text{where} ~B_2:= \{ v \in \R^2 : |v| \leq 1\}. \label{eq:calD}
 \end{align}
We will refer to vectors in $\mathcal D$  as \emph{feasible} vectors.  We also  index the set of \emph{active} vectors by  $\mathcal V \subseteq [n]$. 

\begin{defn} A pair of $x \in \mathcal D$ and $\mathcal V \subset[n]$ is \emph{compatible}  if $|x_j|=1$ for all $j \in [n]$ that are not in $\mathcal V$; we will refer to the corresponding vectors as \emph{inactive}.  
\end{defn}
Thus, for any $x \in \mathcal D$,  the largest and  smallest  compatible sets of active vectors are, respectively:
\[
\mathcal V_{\max} = [n]~~~\text{and}~~~ \mathcal V_{\min} = \{~ j \in [n]~~~:~~~ |x_j|<1~\}.
\]   
 
Let  $a_i$ denote the $i$-th row of $A$, and $a_i(j)$ to denote $A_{ij}$. In reference \cite{BJ2025}, the discrepancy of a row is simply given by a fractional coloring  $\text{disc}_t(i):=(a_i,x_t)$.  On the other hand, when $x \in \mathcal D $, to control the Euclidean norm of 
\[
\sum_{j \in [n]}a_i(j) x_j ~~\text{in}~~ \R^2, 
\]
we will control the absolute value of each coordinate  of this vector. To do this, we specify the set of four test directions in $\R^2$
\[
\Theta_4:= \{\pm e_1, \pm e_2\}. 
\]
We will also use $I:=(i, \xi) \in [m]\times \Theta_4$ to denote a row-direction index.   Then, the \emph{discrepancy} of $I$ at $x$ is
\[
\text{d}_I(x):= \Big (\xi, \sum_{j \in [n]}a_i(j) x_j\Big )_{\R^2}.
\]
 The remaining \emph {energy} $G_i(x)$ of row $i$ at $x$ is given by
\[
G_i(x) := \sum_{j \in [n]}a_i(j)^2 (1-|x_j|^2) 
\]
and the active \emph{size} $q_i(\mathcal V)$ of the row $i$ for given $\mathcal V$ is given by
\begin{align}
q_i(\mathcal V) := \sum_{j \in \mathcal V} a_i(j)^2. \label{eq:q_i}
\end{align}
For brevity, we will use the following notation whenever $\mathcal V$ is fixed:
 \[
 q_i \equiv q_i (\mathcal V). 
 \]
Observe that for each feasible $x$,  a compatible $\mathcal V$, and each $i \in [m]$,
\[
0 \leq G_i(x) \leq q_i.
\]
The energy and size of a row here are analogous to the corresponding quantities in the work of Bansal and Jiang. 

\begin{defn} 
    
\label{def:admissibility}
 Given   $\alpha,\beta, C, \lambda>0$ and a set of active vectors $\mathcal V$, we refer to $x\in  \mathcal D$ as \textbf{$\mathcal V$-admissible} if,  for every row-direction index $I =(i, \xi) \in [m]\times \Theta_4$,  a corresponding (in)equality is satisfied:
 \begin{align*}
 \begin{cases} 
 d_I(x) = 0 & q_i > \alpha ~\textbf{(large row)} \\
 z(d_I(x), G_i(x), q_i) \leq 0 & 0< q_i\leq \alpha~\textbf{(medium row)} \\
 d_I(x) \leq C & q_i =0~\textbf{(small row)} 
 \end{cases} 
 \end{align*}
  where  the function $z$ is given by
\begin{align}
z(d,G,q) := d + \frac{\beta}{\sqrt {q} } G - B(q) \label{eq:z}
\end{align}
 and the barrier term \begin{align}
B(q): = C-\Big(\beta + \frac{2}{\lambda}\Big) \sqrt{q}. \label{eq:Bq}
\end{align} 
\end{defn}

\begin{remark}
   For purposes of evaluating the foregoing admissibility constraints, we use active and inactive $x_j$'s; crucially the  definition of $\mathcal V$-admissibility applies even when $\mathcal V = \emptyset$ and therefore $q_i=0$ for all $i \in [m]$. 
\end{remark}

Our Burkholder-type majorant  $z(d_I(x), G_i(x),  q_i)$  is analogous to the random process  $Z_t(i) = Y_{t}(i) -b$ in the work of Bansal and Jiang, where $Y_t(i) = \text{disc}_t(i) +\beta G_t(i)$ is  discrepancy of a row $i$ regularized with the corresponding energy $G_t(i)$ and $b$ is the target discrepancy.  

If we rescale a row $a_i$ as $\delta a_i$ for $\delta>0$, then  $d_I(x)$ becomes   $\delta d_I(x)$ while  $G_i(x)$ and $q_i$ become $\delta^2 G_i(x)$ and $\delta ^2 q_i$ respectively. Therefore, we normalized $\beta G_i(x)$ by $1/\sqrt {q_i}$ to ensure that this term has the same scale  as  $ d_I(x)$ and $(\beta + 2/\lambda) \sqrt{q}$. 

  As in the work of Bansal and Jiang, our objective is to bound the discrepancy of each medium row by a  constant $B(0) =C$ (when all columns are colored $\mathcal V = \emptyset$ and therefore $q_i = 0$ for all rows); our form of the barrier term  
  \[
B(q) = C-  \omega \sqrt {q}
\]
 for a constant  $\omega>0$ ensures that $d$ and $B(q)-C$ have the same normalization under rescaling of the row.  
    
\begin{remark} If  $\beta, \omega, q >0$, $G\geq 0$ and   $z(d,G,q) \leq 0 $, then 
\[
d \leq B(q)- \frac{\beta}{\sqrt {q} } G \leq C-\omega \sqrt{q}  \leq C.
\] \label{rem:medium_disc}
\end{remark}

In our context, avoiding  \emph{bad} rows,  i.e.,  rows with discrepancy exceeding the given target  in  \cite {BJ2025} corresponds to maintaining   $z(d_I(x), G_i(x),  q_i) \leq 0$ for medium rows (which also  ensures $d_I(x)\leq C$ for such rows). We also maintain $d_I (x)=0$ for large rows and $d_I(x)\leq C$ for small rows.

We would like to construct a function that has a minimizer corresponding to $\mathcal V = \emptyset$ while  guaranteeing that $z \leq 0$ as $q_i$ decreases to zero, and therefore $d_I(x)$ is bounded by $B(0) =C$. We will use the following function to find final vector colorings that attain low discrepancy:  
\begin{align}
\gamma(d, G,q)  = \frac{\alpha}{q} \Phi \Big ( \frac{ z(d,G,q) }{\sqrt {q}}\Big) \label{eq:gamma} 
\end{align}
where 
\begin{align}
\Phi(y):= e^{\lambda y} \label{eq:Phi} 
\end{align} and  $\alpha, \beta, \lambda >0$ are constants, which we will also specify later. The   function $\gamma(d, G,q)$  has a constant ratio between drift in $G$ and diffusion in $d$ because it satisfies
\[
\partial_G \gamma - \frac {\beta}{\lambda}  \partial_{dd} \gamma =0. 
\] This equation fixes the ratio between the positive and negative parts of the Hessian of  $\gamma (d_{I}(x), G_i(x),  q_i )
$
differentiated with respect to $x$ in \eqref{eq:D2gamma}. A similar function $\gamma_t(i) = \lambda \exp(\lambda Z_t(i))$ is used to estimate the number of bad rows  in the work of Bansal and Jiang.

The following two remarks, proved in the Appendix, relate  $C$ and $\omega$  to the other constants (this explains our choice of the barrier $B$ in Eq. \eqref{eq:Bq} and leads to the inequality for $C$ in Assumption \ref{as:C} below).  The first remark guarantees that $z$ satisfies an initial value property analogous to the property $(3^\circ)$ of Burkholder functions. The value of the constant $\omega :=(\beta + 2/\lambda)$ comes from this requirement as well as the requirement that $z$ and $\gamma$ must be monotone decreasing as the size $q$ of the row decreases.

\begin{remark} \label{rem:c} For any $\alpha, \beta, \omega >0$, $0<q \leq \alpha $ and $0\leq G \leq q$, setting
\begin{align}
C > (\omega+ \beta) \sqrt{\alpha}, \label{eq:C_inequality}
\end{align}
 guarantees that $z(0,G,q) <0 $. If, in addition,  $\omega \geq 2/\lambda$, then $\gamma(0,G,q)<1$.
\end{remark}

\begin{remark} \label{rem:omega} For any $\beta, \lambda, q>0$ and $0\leq G \leq q$, setting
\[ 
\omega := \beta + \frac{2}{\lambda}
\]
guarantees that $\partial_q z >0$ and if $z \leq 0$, $\partial_q \gamma \geq 0$. 
\end{remark}

We define the domain
\[
\Omega: = \{ (d,G,q) \in \R^3: 0< q \leq \alpha,~ 0 \leq G \leq q  \},
\]
and the inequality \eqref{eq:C_inequality} becomes the following assumption. 
\begin{ass} \label{as:C} The constants $\alpha, \beta, C, \lambda >0$ satisfy
\begin{align} 
C > \Big(2 \beta  +\frac{2}{\lambda}\Big)\sqrt {\alpha}. \label{eq:C_inequality2}
\end{align}
\end{ass}
\noindent  

Since the Euclidean norm of every compatible $x_j$ for $j \notin \mathcal V$ is equal to 1, 
\[
0 \leq G_i(x) \leq q_i.
\]
Therefore, for every medium row, $(d_I(x), G_i(x), q_i)$  automatically belongs to $\Omega$ when $x$ is compatible. The following lemma restates the preceding remarks on $\Omega$. 

\begin{lemma} \label{lem:monotonicity} On the domain   $\Omega $, 
   the following results hold for $z$ and $\gamma$ defined above:
\begin{enumerate}
    \item[(a)] Under Assumption \ref{as:C},
 $z(0,G,q) <0 $ and $\gamma(0,G,q)<1$;
    \item[(b)] if  $z(d,G,q) \leq 0 $, then $d < C$; and
    \item [(c)] if  $z(d,G,q) \leq 0 $, then $\partial_q z >0$ and $\partial_q \gamma \geq 0$.   
\end{enumerate}

\end{lemma}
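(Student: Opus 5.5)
This is a direct computation on $\Omega$ with $\omega := \beta + 2/\lambda$ throughout, essentially restating Remarks \ref{rem:c}, \ref{rem:medium_disc} and \ref{rem:omega}. I will verify each item from the explicit formulas \eqref{eq:z}, \eqref{eq:Bq}, \eqref{eq:gamma}, \eqref{eq:Phi}.

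\textbf{Part (a).} Use $G \le q$ and $q \le \alpha$ to get
\[
z(0,G,q) = \frac{\beta G}{\sqrt q} - C + \omega\sqrt q \le (\beta+\omega)\sqrt q - C \le (2\beta + 2/\lambda)\sqrt\alpha - C.
\]
The right-hand side is negative by Assumption \ref{as:C}. For $\gamma$, the sharper bound $z(0,G,q)/\sqrt q \le (\beta+\omega) - C/\sqrt q$ is not needed. It suffices that $z(0,G,q) < 0$, so $\Phi(z/\sqrt q) < 1$, which only gives $\gamma < \alpha/q$. Since $\alpha/q \ge 1$, a better bound is required. Following Remark \ref{rem:c}, I would write
\[
\gamma(0,G,q) = \frac{\alpha}{q}\exp\!\Big(\lambda\Big(\frac{\beta G}{q} + \omega\Big) - \frac{\lambda C}{\sqrt q}\Big) \le \frac{\alpha}{q}\exp\!\Big(\lambda(\beta+\omega) - \frac{\lambda C}{\sqrt q}\Big).
\]
Set $s = \sqrt{\alpha/q} \ge 1$. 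Assumption \ref{as:C} gives $\lambda C/\sqrt q > \lambda(\beta+\omega)s$, so the bound is at most $s^2 e^{-\lambda(\beta+\omega)(s-1)}$. Since $\lambda(\beta+\omega) \ge \lambda\omega \ge 2$, this is at most $s^2 e^{-2(s-1)}$, which is $\le 1$ for $s \ge 1$. Indeed, $\log$ of it is $2\log s - 2(s-1) \le 0$. Strictness comes from the strict inequality in Assumption \ref{as:C}.

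\textbf{Part (b).} Since $G \ge 0$ and $q > 0$, the condition $z \le 0$ gives
\[
d \le C - \omega\sqrt q - \frac{\beta G}{\sqrt q} \le C - \omega\sqrt q < C.
\]

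\textbf{Part (c).} This is the only step needing care. Differentiate in $q$:
\[
\partial_q z = -\frac{\beta G}{2q^{3/2}} + \frac{\omega}{2\sqrt q}.
\]
Using $G \le q$, this is at least $(\omega - \beta)/(2\sqrt q) = 1/(\lambda\sqrt q) > 0$, with no sign condition on $z$ needed.

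For $\gamma$, write $\gamma = \alpha q^{-1} e^{\lambda w}$ with $w = z/\sqrt q$. Then
\[
\partial_q\gamma = \alpha q^{-1} e^{\lambda w}\Big(-\frac1q + \lambda\,\partial_q w\Big), \qquad \partial_q w = \frac{\partial_q z}{\sqrt q} - \frac{z}{2q^{3/2}}.
\]
When $z \le 0$ the second term of $\partial_q w$ is nonnegative. Therefore
\[
\lambda\,\partial_q w \ge \frac{\lambda\,\partial_q z}{\sqrt q} \ge \frac{\lambda}{\sqrt q}\cdot\frac{1}{\lambda\sqrt q} = \frac1q,
\]
which gives $\partial_q\gamma \ge 0$. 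The main thing to check is this exact cancellation: it is precisely why the gap $\omega - \beta$ is set to $2/\lambda$, so that it compensates the $1/q$ prefactor.
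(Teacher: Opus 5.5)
Your proof is correct and is essentially the paper's: the lemma restates Remarks \ref{rem:c}, \ref{rem:medium_disc} and \ref{rem:omega}, whose appendix proofs do the same computations, up to cosmetic differences (for $\gamma(0,G,q)$ the paper shows $F(y)=2\log y+\lambda(\beta+\omega-Cy/\sqrt{\alpha})$ has $F(1)<0$ and $F'<0$ on $y\ge 1$ where you use $\log s\le s-1$, and in (c) it differentiates $\log\gamma$ rather than $\gamma$). The only blemish is expository: in part (a) you say the sharper bound on $z(0,G,q)/\sqrt{q}$ ``is not needed'' and then, correctly, use exactly that bound.
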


\begin{remark} Part (c) in the preceding lemma  establishes the following monotonicity properties of $z$ and $\gamma$ on $\Omega$: if $0 < q_{new} \leq q_{old} \leq \alpha$,  $0 \leq G \leq q_{new}$  and $z(d,G,q_{old}) \leq 0$, then 
\[
z(d,G,q_{new}) \leq z(d,G,q_{old})~\text{and} ~\gamma(d,G,q_{new}) \leq \gamma(d,G,q_{old}).
\]
\end{remark}
\begin{remark} In the remainder of this paper, we will assume that Assumption \ref{as:C} is satisfied. In Remark \ref{rem:constants}, we will verify that this assumption is in fact satisfied for a specific set of constants.
\end{remark}

\subsection{Adaptation of the Bellman function method to rank-2 vector discrepancy}  
\label{sec:Bellman_adaptation} 
We will first consider the upper bound on the discrepancy of an individual row and direction, and then address how they are controlled jointly. In the classic Bellman function method, one constructs a Burkholder function $U$ on a low-dimensional domain that majorizes the  value function $V$ and is concave along all admissible directions. In the present setting, for a given row-direction index $I$, the relevant value function is 
\[
\tilde d_{I}(x) :=  d_{I}(x) -C.
\]
For purposes of this section, including Lemmas \ref{lem:Dz}, \ref{lem:grad_hessians} and \ref{eq:weighted_hessian_decomposition}, we will also fix a nonempty set $\mathcal V$ of active vectors since (as noted previously, the size of a row $q_i \equiv q_i(\mathcal  V)$ depends on $\mathcal V$). We  denote the size of  $\mathcal V$ by\footnote{This is another notational departure from work of Bansal and Jiang where  the corresponding quantity is denoted by $n_t$. Since our method does not entail the construction of a random walk, we do not need a time variable.} 
\begin{align}
s:= |\mathcal V|. \label{eq:s}
\end{align}
We identify with $V:=(\R^ {2})^s $ the affine subset of  $(\R^ {2})^n $ obtained by setting all vectors outside of $\mathcal V$ to be constant and treating them as parameters rather than part of the domain in our variational analysis. \\

\textbf{Medium rows.} We  now define  the set of indices corresponding to medium  rows by
\begin{align}
\mathcal Q_{\text{med}} := \{ (i, \xi)  \in  [m] \times \Theta_4 : 0< q_i \leq \alpha \}.  \label{eq:Q}
\end{align} 
For each   $I =(i,\xi) \in \mathcal Q_{\text{med}}$, we define
\begin{align*}
 z_I(x) \equiv z(d_I(x), G_i(x),  q_i) 
 \end{align*} 
 as a function on $V$. 
The corresponding admissibility  constraint is given by $z_I(x) \leq 0$; to emphasize this inequality is defined on the domain $x =(x_j)_{j \in \mathcal V}$,  with the  remaining inactive vectors $x_j$ for $j   \notin \mathcal V$ treated as parameters. 

We define the set of  indices and normal vectors corresponding to active inequality constraints:
\begin{align}
\mathcal Q^*_{\text{med}}(x) := \{ I\in \mathcal Q_{\text{med}}: z_I(x) =0\}~~\text{and}~~~ N_I:= \nabla z_I (x) ~~\text{for each}~~  I \in \mathcal Q_{\text{med}}^*(x). \label{eq:Q*}
\end{align}
 The gradient $\nabla z_I$ 
 acts on the space of vectors $(\R^2)^s$ in the active set $\mathcal V$; for each $j \in  \mathcal V$,  it is given by the following affine function from  $\R^2$ to $\R^2$: 
\begin{align}
\nabla z_I (x)_j = a_{i}(j) \xi - \frac{2 \beta}{\sqrt {q_i(\mathcal V)}} a_i(j)^2 x_j. \label{eq:E_block}
\end{align}

We can confirm that $z_I$ is essentially a Burkholder-like majorant controlling the discrepancy of an individual row in the direction specified in $I$. It has the corresponding properties closely analogous to the definition of Burkholder functions

$(1^{\circ'})$ \emph{Majorization}: As noted earlier, for every feasible $x \in \mathcal D$, $G_i(x)\geq 0$. Therefore, $\tilde d_{I}(x) \leq z_I(x)$.  

$(2^{\circ'})$ \emph{Restricted monotonicity}:  $z_I$ is concave
because its Hessian is negative semidefinite:%
\begin{align}
 D^2  z_I    =   -2\frac{\beta}{\sqrt {q_i}}  \text{diag}_{j \in \mathcal V}(a_i(j)^2 I_2) \in \R^{2s \times 2s}.\label{eq:D2z}
\end{align}
However, in contrast to the application of the Bellman function method to  martingales, which have zero drift, in our  analysis, we will choose perturbations of $x$ orthogonal to  $\nabla z_I$ to eliminate the first-order directional derivative. We will use the following lemma,  proved in the Appendix by a straightforward computation. 
\begin{lemma} \label{lem:Dz} Given $I = (i,\xi)$ in $\mathcal Q_{\text{med}}$, and $x, v\in(\R^2)^n$ with $x_j = v_j$ for all $j \notin \mathcal V$, we have:
\begin{align}
z_I(v) - z_I(x)  = ( \nabla z_I (x), v-x)_{(\R^2)^s}- \frac{\beta}{\sqrt { q_i}}\sum_{j \in \mathcal V} a_{i}(j)^2 |v_j-x_j|^2. \label{eq:z_evolution}
\end{align}
 \end{lemma}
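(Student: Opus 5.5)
Since $z_I$ is a quadratic polynomial in the active variables $(x_j)_{j\in\mathcal V}$, the identity \eqref{eq:z_evolution} is just its exact second-order Taylor expansion. Rather than invoke Taylor's theorem, I will verify it directly by splitting $z_I = d_I + \frac{\beta}{\sqrt{q_i}}G_i - B(q_i)$ and computing each piece's increment. The key observations are:
\begin{itemize}
\item $q_i=q_i(\mathcal V)$ and $B(q_i)$ depend only on $\mathcal V$ and $A$, not on $x$, so the barrier term cancels in $z_I(v)-z_I(x)$.
\item Since $x_j=v_j$ for $j\notin\mathcal V$, every sum over $j\in[n]$ reduces to a sum over $j\in\mathcal V$.
\end{itemize}

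First, the discrepancy term is linear:
\[
d_I(v)-d_I(x)=\sum_{j\in\mathcal V} a_i(j)\,(\xi, v_j-x_j)_{\R^2}.
\]
Second, the energy term has increment
\[
G_i(v)-G_i(x) = -\sum_{j\in\mathcal V} a_i(j)^2\big(|v_j|^2-|x_j|^2\big).
\]
Using the polarization identity $|v_j|^2-|x_j|^2 = 2(x_j,v_j-x_j)+|v_j-x_j|^2$, this becomes
\[
G_i(v)-G_i(x) = -\sum_{j\in\mathcal V} a_i(j)^2\big(2(x_j,v_j-x_j)+|v_j-x_j|^2\big).
\]

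Next, multiply the energy increment by $\beta/\sqrt{q_i}$ and add it to the discrepancy increment. Collecting the terms linear in $v_j-x_j$ gives
\[
\sum_{j\in\mathcal V}\Big(a_i(j)\xi-\frac{2\beta}{\sqrt{q_i}}a_i(j)^2x_j,\; v_j-x_j\Big).
\]
By \eqref{eq:E_block} this is exactly $(\nabla z_I(x),v-x)_{(\R^2)^s}$. The quadratic remainder is
\[
-\frac{\beta}{\sqrt{q_i}}\sum_{j\in\mathcal V}a_i(j)^2|v_j-x_j|^2,
\]
which completes \eqref{eq:z_evolution}.

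This is consistent with the Hessian \eqref{eq:D2z}: half of $(v-x)^T D^2 z_I (v-x)$ reproduces the quadratic remainder, as expected for an exact quadratic expansion. The computation is routine. The only real care needed is to check that $q_i$ is held fixed because $\mathcal V$ is fixed, and that the inactive coordinates contribute nothing because $x_j=v_j$ for $j\notin\mathcal V$. I do not expect any genuine obstacle.
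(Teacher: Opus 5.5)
Your proposal is correct and takes essentially the same route as the paper. Both compute the linear increment of $d_I$, expand $|v_j|^2-|x_j|^2=2(x_j,v_j-x_j)+|v_j-x_j|^2$ in the increment of $G_i$, and regroup the linear terms into $(\nabla z_I(x),v-x)$ via \eqref{eq:E_block}, using that $B(q_i)$ cancels and that the inactive coordinates drop out.
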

Thus for an active constraint  $I \in \mathcal Q^*_{\text{med}}(x)$ and  any vector $v$  orthogonal to $\nabla z_I(x)$, for any step size $t \in \R$, 
\begin{align}
z_I(x+tv)  = - \frac{\beta}{\sqrt { q_i }}t^2 \sum_{j \in \mathcal V} a_{i}(j)^2 |v_j|^2 \leq 0. \label {eq:concavity}
\end{align}
This property, which we call \emph{restricted monotonicity},  adapts the restricted concavity of the Bukholder method to our setting.

$(3^{\circ'})$ \emph{Initial condition}:
 Finally, for every medium row, $(d_I(x), G_i(x), q_i)$   belongs to $\Omega$ for all $x \in 
 \mathcal D$ compatible with $\mathcal V$. Therefore, by Lemma \ref{lem:monotonicity}(a),  $z(0, G, q) \leq 0$ under Assumption \ref{as:C}; this property is analogous to $(3^\circ)$ in the Bellman function setting. \\

 \textbf{Large and small rows.} We define the set of  indices corresponding to large  rows by \begin{align} \label{eq:Qlarge}
\mathcal Q_{lg} := \{  (i, \xi)  ~\text{in}~  [m] \times \Theta_4 :  q_i > \alpha \}. 
\end{align}  For any $I =(i, \xi)$ in $\mathcal Q_{lg}$,  the  affine  constraint and its normal are, respectively:  
\begin{align}
d_I(x):= \Big ( \xi, \sum_{j \in \mathcal V} a_i(j) x_j \Big )_{\R^2}+ \Big (\xi, \sum_{j \notin \mathcal V}a_i(j) x_j\Big )_{\R^2} =0 
~~ ~~\text{and}~~~~
N_I := (a_i(j) \xi )_{j \in \mathcal V} \in (\R^2)^s. \label{eq:normal_lg}
\end{align}
Accordingly, for any vector $v \in \R^{2s}$ orthogonal to $N_I$, $d_I(x+tv) = d_I(x)=0$. 

\begin{remark} \label{rem:small_rows}
For a small row  $i$ with $q_i=0$, we have  $a_i(j) = 0$ for all $j \in \mathcal V$.   Therefore, perturbations of $x$ supported on the active coordinates $\mathcal V$ will not affect $G_i(x)$  or  $d_I(x)$ when $I=(i,\xi)$ for all $\xi \in  \Theta_4$. Thus,  small rows only impose  constraints that are constant on the active variables. 
\end{remark}
 
\textbf{Aggregate discrepancy bounds.} Since we need to control the discrepancy of every row in every direction, we cannot control a sum of $z_I$'s due to possible cancellations. We also cannot simply take their maximum because that would impact concavity and differentiability. However, cancellations can be addressed  by exponentiation of $z_I$'s - this approach is also based on the work of Bansal and Jiang. Exponentiation does introduce additional positive terms in the Hessian, but they can be controlled by choosing a suitable subspace of $V$ and bounding the trace of the principal diagonal blocks of the Hessian (Lemmas \ref{lem:elminating_covariance} and \ref{eq:weighted_hessian_decomposition})

 Accordingly, for     $I \in \mathcal Q_{\text{med}}$, we  define
\begin{align*}
   \gamma_I(x) \equiv \gamma(d_I(x), G_i(x),  q_i). 
 \end{align*} 
as a function on $V$. This function measures how close is $z_I(x)$ to 0; when $z_I(x)=0$, we have 
\begin{align}
    \gamma_{I}(x) =\frac{\alpha}{q_i}. \label{eq:gamma_at_zero}
\end{align}  The Hessian $D^2 \gamma_I(x)$  is a function of $x \in(\R^2)^n$ and
 acts on the space of vectors $(\R^2)^s$ in the active set $\mathcal V$; it  is given by the following lemma,  verified in the Appendix.

\begin{lemma} \label{lem:grad_hessians}  For any  $I = (i, \xi)$ in $\mathcal Q_{\text{med}}$ and $x\in V$, the Hessian acting on the active variables is 
\begin{align}
D^2 \gamma_I (x)  =\frac{\lambda}{ \sqrt {q_i} }  \gamma_I (x)  \Big(\frac{\lambda}{\sqrt{q_i}}   \nabla z_I (x)  \nabla z_I^\top (x) +D^2  z_I\Big) ~\text{for}~ D^2  z_I    =   -2\frac{\beta}{\sqrt {q_i}}  \text{diag}_{j \in \mathcal V}(a_i(j)^2 I_2) \in \R^{2s \times 2s}.\label{eq:Hessian_gamma}
\end{align}

 \end{lemma}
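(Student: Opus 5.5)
The plan is to treat $\gamma_I$ as a composition $\gamma_I(x) = \frac{\alpha}{q_i}\Phi\big(z_I(x)/\sqrt{q_i}\big)$ and apply the chain rule twice. Since $\mathcal V$ is fixed, $q_i = q_i(\mathcal V)$ is a constant on $V$. The only $x$-dependence therefore enters through the scalar function $z_I(x) = z(d_I(x), G_i(x), q_i)$. Writing $h(y) := \frac{\alpha}{q_i} e^{\lambda y/\sqrt{q_i}}$, we have $\gamma_I = h\circ z_I$, with $h'(y) = \frac{\lambda}{\sqrt{q_i}} h(y)$ and $h''(y) = \frac{\lambda^2}{q_i} h(y)$.

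First I compute the gradient and Hessian of $z_I$ on $V$. The term $d_I(x) = (\xi, \sum_j a_i(j) x_j)$ is affine in the active variables, so its gradient block at $j\in\mathcal V$ is $a_i(j)\xi$ and its Hessian vanishes. The term $G_i(x) = \sum_j a_i(j)^2(1-|x_j|^2)$ has gradient block $-2a_i(j)^2 x_j$ and Hessian $-2\,\mathrm{diag}_{j\in\mathcal V}(a_i(j)^2 I_2)$. The inactive coordinates contribute only constants. The barrier $B(q_i)$ is also constant. Multiplying the $G$ contributions by $\beta/\sqrt{q_i}$ recovers \eqref{eq:E_block} and \eqref{eq:D2z}. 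One could alternatively read these off from Lemma \ref{lem:Dz}, since \eqref{eq:z_evolution} is an exact second-order expansion. The linear term gives $\nabla z_I$, and the quadratic form $-\frac{\beta}{\sqrt{q_i}}\sum_j a_i(j)^2|w_j|^2$ equals $\frac12 w^\top D^2 z_I\, w$.

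Next I apply the second-order chain rule:
\[
D^2\gamma_I = h''(z_I)\,\nabla z_I\nabla z_I^\top + h'(z_I)\, D^2 z_I .
\]
Substituting $h'(z_I) = \frac{\lambda}{\sqrt{q_i}}\gamma_I$ and $h''(z_I) = \frac{\lambda^2}{q_i}\gamma_I$ and factoring out $\frac{\lambda}{\sqrt{q_i}}\gamma_I(x)$ gives exactly \eqref{eq:Hessian_gamma}.

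No step is a real obstacle. The only care needed is bookkeeping: the Hessian acts only on the active block $(\R^2)^s$, the parameters $x_j$ for $j\notin\mathcal V$ and $q_i$ are held fixed, and the factor of $2$ in $D^2 z_I$ comes from differentiating $|x_j|^2$.
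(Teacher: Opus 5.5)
Your proposal is correct and follows essentially the same route as the paper: you compute $\nabla z_I$ and $D^2 z_I$ from the affine structure of $d_I$ and the quadratic structure of $G_i$, hold $q_i$ and the inactive coordinates fixed, and apply the chain rule to $\gamma_I=\frac{\alpha}{q_i}\exp\bigl(\lambda z_I/\sqrt{q_i}\bigr)$. The paper does exactly this: it writes $\nabla\gamma_I=\frac{\lambda}{\sqrt{q_i}}\gamma_I\nabla z_I$ and differentiates once more.
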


Although $\gamma_I $ is $\log$-concave in the active variables $V$, it need not be concave. However,  since  $D^2 \gamma_I$ is a difference between a PSD matrix of at most rank one and a diagonal PSD matrix,  we can use this structure to guarantee the existence of  directions with nonpositive second directional derivative   (Lemma \ref{lem:block trace}). 

We now specify the  weights attributable to  small, medium and large rows: for any $I=(i,\xi) \in [m]\times \Theta_4$, 
\[
\hat \gamma_{I, \mathcal V}(x):= 
\begin{cases}
0 & q_i =0\\
\gamma_I (x)  &I \in \mathcal Q_{\text{med}} \\
1 &  I \in \mathcal Q_{\text{lg}} 
\end{cases}
\]

In the Bansal-Jiang paper, for each column $j$, $C_j$ denotes the set of rows $i$ with $a_i(j) \neq 0$, and a medium row is called \emph{dangerous} at time $t$, if $(a_i, x_t)$ exceeds a multiple of the constant discrepancy bound.  The following function is analogous to their count of dangerous rows per column: for a given active column $j \in \mathcal V$, we  define
\begin{align}
L^{\mathcal V}_j(x) := \frac{1}{4}\sum_{I \in [m] \times \Theta_4} a_i(j)^2\hat \gamma_{I, \mathcal V}(x). \label{eq:LVjx}
\end{align}
For any $\mathcal V$ (empty or not) and constants $\tau, \eta>0$, which we will  specify later (see \eqref{eq:softmax_constants}), we  define the softmax $\mathcal S_{\mathcal V}$  as a function on $\mathcal D$:
\begin{align}
\mathcal S_{\mathcal V}(x) := \begin{cases}\tau \log \big(\sum_{j \in \mathcal V} e^{L^{\mathcal V}_j(x) /\tau}\big) &\text{if}~ \mathcal V \neq \emptyset \\
0 & \text{if}~ \mathcal V = \emptyset.
\end{cases} ~~~\text{and}~~~\mathcal J_{\mathcal V}(x) :=\mathcal S_{\mathcal V}(x)- \eta \sum_{j \in [n]}|x_j|^2. 
\label{eq:S_J}
\end{align} 
For brevity, we will use the following notation whenever $\mathcal V$ is fixed:
 \[
 L_j(x) \equiv L^{\mathcal V}_j(x),~~ \mathcal S(x) \equiv \mathcal S_{\mathcal V}(x) ~~\text{and}~~\mathcal J(x) \equiv \mathcal J_{\mathcal V}(x).
 \]

We will now upper bound the size of the set 
\begin{align}
\mathcal Q(x) := \mathcal Q^*_{med}(x) \sqcup \mathcal Q_{lg} \label{eq:Qx}
\end{align}
indexing  the  constraints imposed on the active variables,  based on the following assumption.

\begin{ass} \label{as:alpha} The  constants $\alpha>0$ in Definition \ref{def:admissibility} and  $\rho>0$   given by Eq. \eqref{eq:rho} satisfy 
\begin{align}
\frac{4}{\alpha}+ \rho < 1. \label{eq:A2}
\end{align}
\end{ass}

\begin{remark} In the remainder of this paper, we will assume that the preceding assumption  is satisfied. At the end of our paper, we will fix  the specific numerical values of the constants and verify that it is in fact satisfied (Remark \ref{rem:constants}) 
\end{remark}

\begin{remark} \label{rem:alpha} 

Fix a set of parameters $\alpha, \beta, \lambda>0$ (the last two of which fix the value of $\rho$)  satisfying  Assumption  \ref{as:alpha}, and choose $\delta$
in the interval \begin{align}
0<\delta\le\frac{\alpha(1-\rho)}4-1 \label{eq:delta_bounds}
\end{align}
If the foregoing upper bound is strictly positive, i.e., $ 0<\rho<1$ and $
\alpha>4/(1-\rho)$, then $\delta$ satisfies
\begin{align}
\frac{4(1+\delta) }{\alpha}+ \rho \leq 1.
\end{align}
In the proof of Theorem \ref{thm:Komlos_bound}, we will define $\tau$ and $\eta$, and show that every optimal pair $(\mathcal V^*,x^*)$ in the optimization of $\mathfrak J$ with such $\tau$ and $\eta$ over $\mathfrak F$ (see Eq. \eqref{eq:frakF})  satisfies
$\mathcal S_{\mathcal V^*}(x^*)\le 1+\delta$.

\end{remark}

\begin{lemma} \label{lem:no_active} For any  $x \in \mathcal D$ compatible with $\mathcal V$, assume that there exists $\delta>0$ that satisfies Eq. \eqref{eq:delta_bounds} and  $\mathcal S_{\mathcal V}(x)\le 1+\delta$. Then,  
\[
|\mathcal Q(x)| \leq  \frac{4 s (1+\delta)}{\alpha}
\]
where the constant $\alpha>0$ is specified in Definition \ref{def:admissibility}. 
\end{lemma}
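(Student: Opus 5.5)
The plan is to bound a weighted count of the constrained row-direction indices from both sides using the column-wise quantities $L_j(x)\equiv L^{\mathcal V}_j(x)$ from \eqref{eq:LVjx}. Summing over the active columns and exchanging the order of summation turns their total into a sum over row-direction indices weighted by the active sizes $q_i$ from \eqref{eq:q_i}. Each constrained index will contribute at least $\alpha$ to this sum, while the softmax hypothesis bounds the sum from above.

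\textbf{Degenerate case.} First I dispose of $\mathcal V=\emptyset$. Then $s=0$ and $q_i(\mathcal V)=0$ for every $i$, so $\mathcal Q_{\text{med}}=\mathcal Q_{lg}=\emptyset$. Hence $|\mathcal Q(x)|=0$, and the bound holds trivially. From now on I assume $\mathcal V\neq\emptyset$.

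\textbf{Upper bound via the softmax.} Since the log-sum-exp dominates its largest term, I get
\[
\max_{j\in\mathcal V}L_j(x)\le \mathcal S_{\mathcal V}(x)\le 1+\delta .
\]
Summing over the $s$ active columns then gives $\sum_{j\in\mathcal V}L_j(x)\le s(1+\delta)$.

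\textbf{Lower bound via the weights.} Exchanging the order of summation and using \eqref{eq:q_i},
\[
\sum_{j\in\mathcal V}L_j(x)=\frac14\sum_{I=(i,\xi)\in[m]\times\Theta_4} q_i\,\hat\gamma_{I,\mathcal V}(x).
\]
Every term in this sum is nonnegative: small rows have $\hat\gamma=0$, and for medium rows $\gamma_I>0$ because $\Phi>0$. So I may discard all terms except those with $I\in\mathcal Q(x)=\mathcal Q^*_{med}(x)\sqcup\mathcal Q_{lg}$.
\begin{itemize}
\item For $I\in\mathcal Q^*_{med}(x)$ we have $z_I(x)=0$, so \eqref{eq:gamma_at_zero} gives $q_i\gamma_I(x)=\alpha$ exactly.
\item For $I\in\mathcal Q_{lg}$ we have $q_i\hat\gamma_{I,\mathcal V}=q_i>\alpha$.
\end{itemize}
Therefore $\sum_{j\in\mathcal V}L_j(x)\ge \frac{\alpha}{4}|\mathcal Q(x)|$.

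\textbf{Conclusion.} Combining the two bounds yields $\frac{\alpha}{4}|\mathcal Q(x)|\le s(1+\delta)$, which is the claim.

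There is no serious obstacle here. The one point needing care is that the softmax controls only the maximum of the $L_j$, not their average, so passing to the sum costs exactly the factor $s$ that appears in the statement. Compatibility of $x$ with $\mathcal V$ is used only to make sure each medium-row triple lies in $\Omega$, so that $\gamma_I$ is well defined and positive. The constraint \eqref{eq:delta_bounds} on $\delta$ is not needed for this counting step itself; it matters only downstream.
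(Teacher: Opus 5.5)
Your proposal is correct and follows essentially the same route as the paper. The paper writes $|\mathcal Q(x)|=\sum_{I\in\mathcal Q(x)}\sum_{j\in\mathcal V}a_i(j)^2/q_i$, bounds $1/q_i\le\hat\gamma_{I,\mathcal V}(x)/\alpha$ (with equality on $\mathcal Q^*_{med}(x)$ by \eqref{eq:gamma_at_zero}), and then uses $\max_j L_j(x)\le\mathcal S_{\mathcal V}(x)\le 1+\delta$; this is your exchange-of-summation inequality read in the opposite direction, and your separate handling of $\mathcal V=\emptyset$ and your remark that \eqref{eq:delta_bounds} plays no role here are both accurate.
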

\begin{proof}
For any  $I \in \mathcal Q^*_{med}(x)$, $z_I(x) = 0$, and therefore $\gamma_{I}(x) =\alpha/q_i$ as we noted previously in \eqref{eq:gamma_at_zero}. Since    $\mathcal S(x)\ge\max_jL_j(x)$, the assumption $\mathcal S(x)\le  1+\delta$ implies $L_j(x)\leq  1+\delta$ for every $j$.  Using these facts as well as the definition of $q_i$ in \eqref{eq:q_i} and the fact that for a large row $i$, we have $q_i >\alpha$, 
\begin{align*}
 |\mathcal Q(x)| &= \sum_{(i, \xi) \in \mathcal Q^*_{med}(x)  }  \sum_{j \in \mathcal V} \frac{a_i(j)^2 }{q_i}+ \sum_{(i, \xi) \in \mathcal Q_{lg}  }  \sum_{j \in \mathcal V} \frac{a_i(j)^2 }{q_i} \\
  &\leq  \frac{1}{\alpha}\Big(\sum_{j \in \mathcal V} \sum_{(i, \xi) \in \mathcal Q^*_{med}(x)  }  a_i(j)^2 \hat \gamma_{I,\mathcal V}  +\sum_{j \in \mathcal V} \sum_{(i, \xi) \in \mathcal Q_{lg}  }  a_i(j)^2 \hat \gamma_{I,\mathcal V} \Big)\\
  & \leq  \frac{4}{\alpha} \sum_{j \in \mathcal V} L_j(x)  \leq  \frac{4 s (1+\delta)}{\alpha}.
\end{align*}
\end{proof}

When $s \geq 2$, combining  $L_j$’s through the softmax contributes an extra nonnegative term (the rightmost term in Eq. \eqref{eq:D2S}) to $D^2 \mathcal S$. However, this  term can be eliminated by equalizing the directional derivatives of  $L_j$; this results in at most $s-1$ further affine constraints. 

\begin{lemma}
\label{lem:elminating_covariance}
We fix an arbitrary $j_0 \in \mathcal V$ and define the following subspaces of $V$
\[
  \ker(\nabla L_j(x) -\nabla L_{j_0}(x)): = \{ y \in V : ( \nabla L_j(x) -\nabla L_{j_0}(x), y)_{(\R^2)^s} =0\}  
\] and
\begin{align}
Y_1 :=  \bigcap_{j \in \mathcal V \setminus \{j_0\}} \ker(\nabla L_j(x) -\nabla L_{j_0}(x)). \label{eq:Y1}
\end{align}
with an empty intersection (if $s=1$) equal to $V$.
If $v \in Y_1$, then we have  
\begin{align}
v^\top D^2 \mathcal S(x) v=  v^\top \Big(\sum_{j \in \mathcal V} \varphi_j(x) D^2 L_j(x) \Big) v
\label{eq:quadratic}
\end{align}
for the  softmax weights $\varphi_j$ given for each $j \in \mathcal V$ by 
\begin{align}
\varphi_j(x):= \frac{e^{L_j(x)/\tau }}{\sum_{r\in \mathcal V} e^{L_r(x)/\tau }}.  \label{eq:softmax_weights}
\end{align}

\end{lemma}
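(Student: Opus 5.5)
The argument is a direct computation of the Hessian of the log-sum-exp composed with the smooth functions $L_j$, followed by the observation that the covariance term vanishes on $Y_1$. Write $\mathcal S(x)=\tau\log\big(\sum_{j\in\mathcal V}e^{L_j(x)/\tau}\big)$ as a function on $V$. The $L_j$ are smooth on $V$: each $\hat\gamma_{I,\mathcal V}$ is either constant, for large and small rows, or equal to $\gamma_I$, which is smooth in the active variables. Differentiating once gives $\nabla\mathcal S=\sum_j\varphi_j\nabla L_j$ with the softmax weights \eqref{eq:softmax_weights}. We also use $\nabla\varphi_j=\tau^{-1}\varphi_j\big(\nabla L_j-\sum_r\varphi_r\nabla L_r\big)$.

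Differentiating a second time gives
\begin{align}
D^2\mathcal S(x)=\sum_{j\in\mathcal V}\varphi_j D^2L_j+\frac1\tau\Big(\sum_{j}\varphi_j\nabla L_j\nabla L_j^\top-\Big(\sum_j\varphi_j\nabla L_j\Big)\Big(\sum_j\varphi_j\nabla L_j\Big)^\top\Big). \label{eq:D2S}
\end{align}
The second term is $\tau^{-1}$ times the covariance matrix of the random vector that equals $\nabla L_j$ with probability $\varphi_j$. Evaluated at $v$, it equals
\[
\frac1\tau\sum_j\varphi_j\Big((\nabla L_j,v)-\sum_r\varphi_r(\nabla L_r,v)\Big)^2\ge0.
\]

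Now take $v\in Y_1$. By definition $(\nabla L_j(x),v)=(\nabla L_{j_0}(x),v)=:c$ for every $j\in\mathcal V$. Since $\sum_r\varphi_r=1$, the mean $\sum_r\varphi_r(\nabla L_r,v)$ also equals $c$, so every summand in the variance vanishes. Only $v^\top\sum_j\varphi_jD^2L_j\,v$ remains, which is \eqref{eq:quadratic}. When $s=1$ there is a single weight $\varphi_{j_0}=1$. Then $\mathcal S=L_{j_0}$ and the identity is immediate with $Y_1=V$.

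There is no real obstacle here. The only care needed is in the bookkeeping of the second derivative, in particular writing the cross term in \eqref{eq:D2S} as a variance so that its vanishing on $Y_1$ is transparent. One should also note that all quantities are taken with respect to the active variables only, consistent with the identification of $V$.
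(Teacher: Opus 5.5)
Your proposal is correct and follows essentially the paper's proof. The paper writes the Hessian as $\sum_j \varphi_j D^2 L_j + \frac{1}{\tau} DL^\top(\text{diag}(\varphi)-\varphi\varphi^\top)DL$, which is your covariance term in matrix form, and it kills that term on $Y_1$ because $DL\,v = c_v \one_s$; your variance phrasing is a restatement of that same computation, and your $s=1$ case matches as well.
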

\begin{proof}

The gradient and Hessian of $\mathcal S$ with respect to $x$ are respectively 
\begin{align}
\nabla \mathcal S =  \sum_{j \in \mathcal V} \varphi_j \nabla L_j, ~\text{and}~
D^2 \mathcal S =  \sum_{j \in \mathcal V} \varphi_j D^2 L_j + \frac{1}{\tau} D L^\top (\text{diag} (\varphi) - \varphi \varphi^\top )D L \label{eq:D2S} 
\end{align}
where $DL:  (\R^ 2)^s \to \R^s $ is the Jacobian of $L :=(L_1, \dots, L_s)^\top$ whose  rows are given by  $D L(x)_{j, :} = \nabla^\top  L_j(x)$, and  $\varphi$ is the vector of softmax weights, which forms an element in the probability simplex in $\R^s$ for every $x \in (\R^2)^n$.
\[
\varphi(x):= (\varphi_1(x), \dots, \varphi_s(x))^\top.
\]
When $s=1$, and $Y_1=V$, $\varphi_{j_0}=1$, and $v^\top DL^\top \big(\text{diag} (\varphi) - \varphi \varphi^\top \big)DL v$ is trivially zero. When $s\geq 2$, 
by our construction, for any $v \in Y_1$, $(v, \nabla L_j)=(v, \nabla L_{j_0})$ for all $j \neq j_0$. Therefore, $DL(x)v = c_v \one_s$ for some scalar $c_v$ that depends on $v$. Then, 
\begin{align*} 
 v^\top DL^\top \big(\text{diag} (\varphi) - \varphi \varphi^\top \big)DL v = c^2_v \one_s^\top \varphi - c^2_v \one_s^\top \varphi\varphi^\top \one_s 
 =0. 
\end{align*}
and Eq. \eqref{eq:quadratic} follows.
\end{proof} 

We will define a subspace $Y$ of $V$ given by the intersection of the subspace $Y_1$   with the orthogonal complement $W^\perp $ of the span of   normal vectors corresponding to the active constraints. For $v \in Y$, the  extra positive term  Eq.\eqref{eq:D2S} in $D^2 \mathcal S$ attributable  to combining  $L_j$’s through the softmax is eliminated in the direction of such $v$. Moreover, $z_I(x+tv) \leq 0$ for all $I \in \mathcal Q^*_{med}(x)$ by Eq. \eqref{eq:concavity}.  

Now we will lower bound the dimension of $Y$. We start with the $2s$ dimensions in $V$ and use at most $s-1$ dimensions to eliminate the nonnegative  term in the softmax Hessian mentioned above. We will also ``spend'' $\dim W$ dimensions to maintain orthogonality to the active admissibility constraints. The decisive calculation 
\[
\dim Y \geq 2s - (s-1) - \dim W \geq  1+ \rho s.
\]
in the next Lemma shows that this still leaves   $Y$ sufficiently large.

\begin{lemma} \label{lem:dimY_lb}
We  assume that for a given  $x \in \mathcal D$ compatible with $\mathcal V$,   there exists $\delta>0$ that satisfies Eq. \eqref{eq:delta_bounds} and  $\mathcal S_{\mathcal V}(x)\le 1+\delta$.  Let $Y_1$ be given by Eq. \eqref{eq:Y1} and  let $W \subset V$ denote the span of normals $N_I(x)$  given by Eqs. \eqref{eq:Q*} and \eqref{eq:normal_lg} for all  active (in)equalities $I \in \mathcal Q(x)$ given by  Eq. \eqref{eq:Qx}. Then,  
\begin{align}
\dim Y \geq 1+ \rho s~~~~\text{where} ~~~~ Y: = W^\perp \cap Y_1. \label{eq:Y}
\end{align}
\end {lemma}
\begin{proof}
By  Lemma \ref{lem:no_active},
 \begin{align}
 \dim W \leq |\mathcal Q(x)|  \leq \frac{4 (1+\delta) s}{\alpha}. \label{eq:dimW_ub}
 \end{align}
Since  $Y_1$ is at least $2s - (s-1)$ dimensional, using  the preceding bound \eqref{eq:dimW_ub} and  Assumption \ref{as:alpha}:
\[
\dim Y \geq 2s - (s-1) - \dim W \geq 1 + s\Big(1-\frac{4 (1+\delta) }{\alpha}\Big) \geq 1+ \rho s.
\]

\end{proof}

 For  a nonempty set of active vectors $\mathcal V$ and inactive vectors  $x_j$ with $|x_j|=1$ for all indices $j \in [n], j \notin \mathcal V$, we  define the \emph{$\mathcal V$-admissible set}
\begin{align} \label{eq:FV}
\mathcal F_{ \mathcal V}(x) :=\{ v \in \mathcal D: v_j = x_j ~\text{for all}~ j \notin \mathcal V, v ~\text{is}~  \mathcal V\text{-admissible} \}
\end{align} where the $\mathcal V$-admissibility is  given by Definition \ref{def:admissibility}. Suppose that $x$ is compatible with $\mathcal V$, is $\mathcal V$-admissible, and satisfies $\mathcal S_{\mathcal V}(x)\le1+\delta$, with $\delta$ as in \eqref{eq:delta_bounds}. Lemmas \ref{lem:dimY_lb}, \ref{eq:weighted_hessian_decomposition} and \ref{lem:block trace} then provide a nonzero $v\in Y$ such that $(v,D^2 \mathcal S_{\mathcal V}(x) v)\leq 0$, and therefore there exists  a direction of negative second derivative of  $\mathcal J_{\mathcal V}$. If all active coordinates lie strictly inside their disks, i.e., $|x_j|<1$ for $j \in \mathcal V$, then a sufficiently short two-sided segment in the direction $v$ remains in $\mathcal F_{\mathcal V}(x)$. Consequently, such a point cannot be a local minimizer. This idea is used to prove there exists a global minimizer of  $\mathcal J_{\mathcal V}$ on $\mathcal F_{ \mathcal V}(x)$ with $|x_j|=1$ for some $j \in \mathcal V$ (Theorem \ref{thm:min_principle}).  \\

\textbf{Contraction upon  decrease of number of active vectors.}
Let us define the global optimization domain over all possible active sets, as well as redefine $\mathcal J_{\mathcal V}$ as function on such domain.
\begin{align} \label{eq:frakF}
\mathfrak F :=\{ (\mathcal V,x) :   \mathcal V \subseteq [n], x \in\mathcal F_{ \mathcal V} \}  ~~~~~\text{and}~~~~~ \mathfrak J (\mathcal V,x):=\mathcal J_{\mathcal V}(x) 
\end{align}
where
\[
  \mathcal F_{ \mathcal V} :=\{ x \in \mathcal D: |x_j| = 1 ~\text{for all}~ j \notin \mathcal V, x ~\text{is}~  \mathcal V\text{-admissible} \}. 
\]
 Since each $ \mathcal F_{ \mathcal V}$ is a closed subset of a compact set $\mathcal D$, each $\mathcal F_{ \mathcal V}$ is compact as well. Thus, $\mathfrak F$   has finitely many compact components and therefore is compact.   The point $x =0$ is compatible with $\mathcal V = [n]$ and is  $[n]$-admissible ($d_I(0) = 0$ for large and small rows and $z(0, q_i, q_i)<0$ by Lemma \ref{lem:monotonicity}(a) for medium rows).   
Thus,   $\mathcal F_{[n]}$ is nonempty, hence so is $\mathfrak F$.  For each  $\mathcal V$,  $\mathcal J_{\mathcal V}$ is continuous on  $\mathcal F_{\mathcal V}$. Since the set of possible $\mathcal V$'s is finite and  at least one $\mathcal F_{ \mathcal V}$ is nonempty, there exists a global minimizer of $\mathfrak J$ over  $\mathfrak F$.

Thus, we will minimize $\mathfrak J $ over  $\mathfrak F$,  breaking ties in favor of  $\mathcal V$ with the smallest size.   

\begin{remark} [Tie-breaking rule]
    \label{rem:ties}
 Among all  pairs $(\mathcal V,x)$ of global minimizers of $\mathfrak J$  over  $\mathfrak F$, choose one with minimal $| \mathcal V|$.
 \end{remark}

The following lemma, proved in the Appendix,  shows that freezing one or more coordinates on a disk boundary, i.e., decreasing the size of $\mathcal V$ among the class of compatible active sets is a contraction.  

\begin{lemma}[Contraction] \label{lem:contraction}  Let $\mathcal V' \subsetneq \mathcal V$, and let $x \in \mathcal D$ satisfy $|x_j| = 1$ for all $j \notin  \mathcal V'$. If  Assumption \ref{as:C}  holds and $x$ is $\mathcal V$-\emph {admissible}, then $x$ is $\mathcal V'$-\emph {admissible}, and 
\[
\mathcal J_{\mathcal V'}(x) \leq \mathcal J_{\mathcal V}(x).
\]
\end {lemma}

In summary, the  contraction lemma and the tie-breaking rule  guarantee that the global minimizer lies strictly inside every active disk. The minimum principle then rules out a nonempty active set. Therefore, the minimizing active set is empty, and admissibility guarantees the desired discrepancy bound. Thus, our  argument differs from the classical martingale setting and represents a minimum principle: instead of controlling the expectation of a martingale, we rule out the existence of a minimizer in the interior of the constrained set.

However, since we take advantage of  directions along which $\mathcal J_{\mathcal V}$ has nonpositive second directional derivative, our method resembles Burkholder's Bellman function method. Also, our adaptation  of the state variables of the Bansal-Jiang controlled random walk to the present setting resembles the use of sufficient statistics in the more recent applications of the Burkholder method to online learning by Foster, Rakhlin and Sridharan.

 \section {Proof of Koml\'os conjecture for the rank-2 vector discrepancy}
 \label{sec:rank_2_core_proof}

We will now complete our proof of the Koml\'os conjecture for the rank-2 vector discrepancy using the following sequence of results.
The next lemma establishes a useful decomposition of the restriction of the Hessian $D^2\mathcal S_{\mathcal V}$ to $Y$ and the subsequent Lemma \ref{lem:block trace} exploits this decomposition to guarantee restricted concavity $\mathcal S_{\mathcal V}$ and $\mathcal J_{\mathcal V}$.  Theorem \ref{thm:min_principle} provides a minimum principle for $\mathcal J_{\mathcal V}$ over $\mathcal F_{\mathcal V}(x)$, Lemma \ref{lem:global_min} characterizes the global minimum of $\mathfrak J$ over $\mathfrak F$, Theorem \ref{thm:Komlos_bound}   uniformly bounds rank-2 vector discrepancy, and Remark \ref{rem:constants} provides specific numerical values of the constants that satisfy Assumptions \ref{as:C} and  \ref{as:alpha}.

\begin{lemma} \label{eq:weighted_hessian_decomposition}
For  softmax weights $\varphi_j$ defined by \eqref{eq:softmax_weights},
\[
\sum_{j \in \mathcal V} \varphi_j(x) D^2L^{\mathcal V}_j(x)  =K(x)-D(x),
\]
for the PSD matrices $D(x) = \text{diag}_{j \in \mathcal V}(d_j(x) I_2) $ and $K(x)$ in $\R^{2s \times 2s}$  satisfying  
\begin{align}
\tr K_{jj}(x) \leq \rho d_j(x) ~~\text{for}~ ~
\rho := \frac{\lambda  (1 +2 \beta)^2 }{ 2   \beta}. \label{eq:rho}
\end{align}
for all $x \in \mathcal D$ and  each $j \in \mathcal V$ where $K_{jj}(x) \in \R^{2\times 2}$ is the $j$-th principal diagonal block of $K(x)$.
\end{lemma}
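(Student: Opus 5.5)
The plan is to compute $\sum_{j\in\mathcal V}\varphi_j D^2L_j$ explicitly from Lemma \ref{lem:grad_hessians}. Then I separate its rank-one (positive) parts from its block-diagonal (negative) parts. Finally I compare the traces of the diagonal blocks term by term.

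\emph{Reduction to medium rows.} By \eqref{eq:LVjx}, $L_j=\frac14\sum_I a_i(j)^2\hat\gamma_{I,\mathcal V}$. The weights $\hat\gamma_{I,\mathcal V}$ are constant ($0$ or $1$) for small and large rows, so only medium rows contribute to the Hessian:
\[
\sum_{j\in\mathcal V}\varphi_j D^2L_j=\sum_{I\in\mathcal Q_{\text{med}}} c_I(x)\,\frac{1}{\gamma_I(x)}D^2\gamma_I(x),\qquad c_I(x):=\frac{\gamma_I(x)}{4}\sum_{j\in\mathcal V}\varphi_j(x)\,a_i(j)^2\ \ge 0 .
\]
Here $c_I\ge0$ because $\gamma_I>0$ and $\varphi$ lies in the simplex.

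\emph{Definition of $K$ and $D$.} Substituting \eqref{eq:Hessian_gamma}, I define
\[
K(x):=\sum_{I\in\mathcal Q_{\text{med}}} c_I\,\frac{\lambda^2}{q_i}\,\nabla z_I(x)\nabla z_I(x)^\top,\qquad
D(x):=\sum_{I\in\mathcal Q_{\text{med}}} c_I\,\frac{2\beta\lambda}{q_i}\,\mathrm{diag}_{k\in\mathcal V}\big(a_i(k)^2I_2\big).
\]
Then $K$ is PSD as a nonnegative combination of rank-one PSD matrices. $D$ has the stated form with $d_k(x)=\sum_I c_I\frac{2\beta\lambda}{q_i}a_i(k)^2\ge0$.

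\emph{Trace comparison.} The $k$-th diagonal block of $K$ has trace $\tr K_{kk}=\sum_I c_I\frac{\lambda^2}{q_i}|\nabla z_I(x)_k|^2$. By \eqref{eq:E_block}, $|\nabla z_I(x)_k|\le |a_i(k)|\big(1+\frac{2\beta}{\sqrt{q_i}}|a_i(k)|\,|x_k|\big)$. Since $k\in\mathcal V$, we have $a_i(k)^2\le q_i$ by \eqref{eq:q_i}, and $|x_k|\le1$ because $x\in\mathcal D$. Hence $|\nabla z_I(x)_k|^2\le (1+2\beta)^2a_i(k)^2$. Summing with the same nonnegative coefficients gives
\[
\tr K_{kk}\le \sum_I c_I\frac{\lambda^2(1+2\beta)^2}{q_i}a_i(k)^2=\frac{\lambda(1+2\beta)^2}{2\beta}\,d_k ,
\]
which is exactly $\rho\, d_k$ with $\rho$ as in \eqref{eq:rho}.

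\emph{Main obstacle.} The only step needing care is the pointwise bound on the block $\nabla z_I(x)_k$. It relies on two facts: the size $q_i$ is computed over $\mathcal V$, so $a_i(k)^2\le q_i$ holds for active $k$; and feasibility gives $|x_k|\le1$. The rest is bookkeeping, namely that the coefficients $c_I$ are shared between $K$ and $D$, which is what makes the comparison term by term.
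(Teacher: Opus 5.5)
Your proposal is correct and follows the paper's proof essentially verbatim. Your coefficient $c_I(x)$ is exactly the paper's $\gamma_I(x)\sigma_i(x)$ with $\sigma_i(x)=\frac{1}{4}\sum_{j\in\mathcal V}\varphi_j(x)a_i(j)^2$, which yields the same $K$ and $D$. Your trace comparison uses the same blockwise bound $|\nabla z_I(x)_k|\le(1+2\beta)|a_i(k)|$, obtained from $a_i(k)^2\le q_i$ and $|x_k|\le 1$.
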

\begin{proof}
Let us set
\[
 \sigma_i(x) = \frac{1}{4}\sum_{ j \in \mathcal V} \varphi_j(x) a_{i}(j)^2.
\]
By Eq. \eqref{eq:Hessian_gamma}, 
\[
 \sum_{j \in \mathcal V} \varphi_j(x) D^2 L^{\mathcal V}_j(x)  =K(x)-D(x)
\]
where 
\[
K(x)  =  \lambda^2\sum_{I = (i,\xi) \in \mathcal Q_{\text{med}}} \gamma_I (x) \frac{\sigma_i(x)}{q_i}  \nabla z_I(x) \nabla z_I(x)^\top ~\text{and}~ D(x) = 2  \lambda  \beta\sum_{I= (i,\xi) \in \mathcal Q_{\text{med}} }\gamma_I (x)\frac{\sigma_i(x)}{q_i} \text{diag}_{j \in \mathcal V}(a_i(j)^2 I_2).
\]
This decomposition shows that $K$ and $D$ are positive semidefinite for all $x \in (\R^2)^n$. 

For each $I \in \mathcal Q_{\text{med}}$ and $j \in  \mathcal V$, by  Eq. \eqref{eq:E_block}, and the fact that $a_i(j)^2 \leq q_i$, $|\xi|=1$, and  $|x_j| \leq 1$, we have
\begin{align*}
| \nabla z_I(x)_j| =| a_{i}(j) \xi - \frac{2 \beta}{\sqrt {q_i}} a_i(j)^2 x_j| \leq |a_{i}(j)|(1+ 2\beta) .  
\end{align*}
Therefore, the trace of the PSD matrix $(\nabla z_I(x)\nabla z_I^\top(x))_{jj}$ in $\R^{2 \times 2} $ of rank at most one is upper bounded by  $a_{i}(j)^2(1+ 2\beta)^2  $, and for
\[
d_j(x) = 2    \lambda  \beta\sum_{I= (i,\xi) \in \mathcal Q_{med}} \gamma_I(x) \frac{\sigma_i(x)}{q_i} a_i(j)^2
\]
we have
\[
\tr K_{jj}(x)  \leq \lambda^2\sum_{I = (i,\xi)\in \mathcal Q_{med}} \gamma_I(x) \frac{\sigma_i(x)}{q_i}   (1 +2 \beta)^2 a_{i}(j)^2 =  \rho d_j (x)
\]
where $\rho$ is given by \eqref{eq:rho}. 

\end{proof}

\begin{lemma} (Restricted concavity) \label{lem:block trace} We fix the set of  active vectors $\mathcal V$ and a constant $\rho>0$, and consider the following matrices in $\R^ {2s \times 2s}$  
\[
D:=\text{diag}_{j \in \mathcal V}(d_j I_2) \succeq 0 ~\text {and}~ K \succeq 0,  
\]
and assume that for all $j \in \mathcal V$
\begin{align}
\tr K_{jj} \leq \rho d_j. \label{eq:trace_bound}
\end{align}
If $ W$ is a subspace of $(\R^2)^s$ with $\dim W > \rho s$, then there exists nonzero $v \in W$ satisfying 
\begin{align}
(v, K v) \leq  (v, D v). \label{eq:quadratic_inequality} 
\end{align}

 \end{lemma}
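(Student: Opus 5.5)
The plan is to reduce the statement to a trace-counting argument for eigenvalues of a normalized matrix. The idea is that the trace bound \eqref{eq:trace_bound} says that, after rescaling by $D$, the matrix $K$ has total trace at most $\rho s$. A positive semidefinite matrix with trace at most $\rho s$ can have fewer than $\rho s$ eigenvalues exceeding $1$. The hypothesis $\dim W>\rho s$ then forces $W$ to meet the complementary eigenspace after the rescaling.

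\textbf{Step 1 (degenerate blocks).} Let $Z:=\{j\in\mathcal V: d_j=0\}$. For $j\in Z$, \eqref{eq:trace_bound} gives $\tr K_{jj}\le 0$. Since $K\succeq 0$, this forces $K_{jj}=0$, and hence, again by positive semidefiniteness, every entry of $K$ in the rows and columns of block $j$ vanishes. Let $\pi$ be the coordinate projection of $(\R^2)^s$ onto the blocks indexed by $\mathcal V\setminus Z$. There are two cases.
\begin{itemize}
\item If $\ker\pi\cap W\neq\{0\}$, pick a nonzero $v$ in it. Then $v$ is supported on the blocks in $Z$, so $(v,Kv)=0=(v,Dv)$, and \eqref{eq:quadratic_inequality} holds.
\item Otherwise $\pi|_W$ is injective, so $\dim\pi(W)=\dim W>\rho s\ge\rho s'$ with $s':=s-|Z|$. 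Moreover, for any $v\in W$ both quadratic forms depend only on $\pi v$. It therefore suffices to treat the case where every $d_j>0$, with $s'$ in place of $s$. If $s'=0$, we are already in the first case, because $W\ne\{0\}$.
\end{itemize}

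\textbf{Step 2 (normalization).} Assume all $d_j>0$, so $D$ is invertible, and set $M:=D^{-1/2}KD^{-1/2}\succeq 0$. Its $j$-th diagonal block is $K_{jj}/d_j$, so
\[
\tr M=\sum_j \frac{\tr K_{jj}}{d_j}\le \rho s .
\]
Let $k$ be the number of eigenvalues of $M$ that are strictly greater than $1$, counted with multiplicity. If $k\ge 1$, then $k<\tr M\le\rho s$; if $k=0$, then trivially $k=0<\dim W$. Let $E$ be the span of the eigenvectors of $M$ with eigenvalue at most $1$. Then $\dim E=2s-k$, and $(u,Mu)\le|u|^2$ for all $u\in E$.

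\textbf{Step 3 (intersection).} Since $D^{1/2}$ is invertible, $\dim D^{1/2}W=\dim W$. Hence
\[
\dim\bigl(D^{1/2}W\cap E\bigr)\ge \dim W+(2s-k)-2s=\dim W-k>0,
\]
using $k<\rho s<\dim W$ (or $k=0$). Choose a nonzero $u$ in this intersection and set $v:=D^{-1/2}u\in W$, which is nonzero. Then
\[
(v,Kv)=(u,Mu)\le |u|^2=(v,Dv),
\]
which is \eqref{eq:quadratic_inequality}.

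\textbf{Main obstacle.} The only delicate point is the strict counting $k<\rho s$ together with the possibly vanishing $d_j$, which is why Step 1 is handled separately. The eigenvalue-counting step itself is the core of the argument and is routine once $M$ is introduced. An alternative would be to average $(v,Kv)-(v,Dv)$ over the unit sphere of $W$. I will not take that route, because $\tr(P_WD)$ need not dominate $\tr K$ when $W$ is poorly aligned with the coordinate blocks, and the $D^{-1/2}$ normalization avoids exactly this issue.
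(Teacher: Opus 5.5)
Your proof is correct. Its first step matches the paper's: the paper also notes that $d_j=0$ forces the whole $j$-th block row and column of $K$ to vanish, so $\ker D\subseteq\ker K$, and it handles the case $W\cap\ker D\neq\{0\}$ exactly as you do (your $\ker\pi$ is $\ker D$).

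The routes diverge in the nondegenerate case.

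\textbf{The paper's route.} It does not project away the degenerate blocks. It uses the pseudoinverse to write $K=D^{1/2}B_DD^{1/2}$ with $B_D=D^{\dagger/2}KD^{\dagger/2}$. Since $D$ is positive definite on $W$, it takes a $D$-orthonormal basis $w_1,\dots,w_r$ of $W$ and averages:
\[
\sum_i(w_i,Kw_i)\le\tr B_D\le\rho s<r,
\]
so some $w_i$ satisfies $(w_i,Kw_i)<1=(w_i,Dw_i)$. The averaging approach you set aside is therefore the paper's approach. It is carried out over a $D$-orthonormal basis rather than the Euclidean unit sphere of $W$, and that choice of metric removes the misalignment problem you raised.

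\textbf{Your route.} You count the eigenvalues of $M=D^{-1/2}KD^{-1/2}$ exceeding $1$ and intersect $D^{1/2}W$ with the complementary spectral subspace.

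\textbf{Comparison.}
\begin{itemize}
\item The paper's version is shorter and needs no spectral decomposition. It also gives a strict inequality for some basis vector.
\item Yours is a spectral refinement of the same trace bound and proves more. Every vector in a subspace of $W$ of dimension at least $\dim W-k$ (where $k<\rho s$ or $k=0$) satisfies the inequality. That would let one absorb additional linear constraints without redoing the argument.
\item Your projection-based reduction avoids the pseudoinverse bookkeeping, at the cost of a slightly longer case analysis.
\end{itemize}
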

 \begin{proof}  Since $K$ is PSD, then so is $K_{jj}$. When $d_j =0$, $K_{jj}$ is traceless. These facts imply that  $K_{jj}$ is a matrix of all zeros. A standard fact about PSD matrices guarantees that the entire $j$-th block  row and column vanish. Therefore, 
 \begin{align}
 \text{ker} \, D \subseteq     \text{ker} \, K. \label{eq:kernel_inclusion}
 \end{align}
  If $W \cap \text{ker} \, D$  contains a nonzero vector, then the inequality \eqref{eq:quadratic_inequality} holds with equality. 
  
  In the remainder of this proof, we will assume that  $W \cap \text{ker} \, D =\{ 0\} $.  Let $P_D$ denote the orthogonal projection on $\text{ran} \, D$. Since $K$ is symmetric and  the inclusion \eqref{eq:kernel_inclusion} holds, 
 \[
 K = P_D K P_D.
 \]
The Moore--Penrose pseudoinverse $D^\dagger $ is given by $\text{diag}_{j \in \mathcal V: d_j >0}(\frac{1}{d_j} I_2)$ when $d_j \neq 0$ with a zero $2 \times 2$ block corresponding to each $d_j =0$. We let
 \[
 B_D=  D^{\dagger/2} K D^{\dagger/2}
 \]
 which is also a PSD matrix. Then, since  $\text{ran} \, K \subseteq \text{ran} \, D$,
 we have
 \begin{align}
 K = D^{1/2} B_D D^{1/2}. \label{eq:K_decomposition}
 \end{align}
Since we are  assuming that $W \cap \text{ker} \, D =\{0\}$, $D$ is positive definite on $W$. Therefore, we can choose a $D$-orthonormal basis $w_1, \dots, w_r$ of $W$ where $r = \dim W$, and let $v_i =  D^{1/2} w_i$. The  vectors $v_i$ are  orthonormal in $\text{ran} \, D$. The $2 \times 2$ principal diagonal blocks of $D^{\dagger} K$ are given by $\frac{1}{d_j} K_{jj}$ for $d_j \neq 0$ and zero otherwise. By the bounds in \eqref{eq:trace_bound}, we have
\[
\tr (B_D) = \tr (D^{\dagger} K) = \sum_{j : d_j >0} \frac{ \tr (K_{jj})}{d_j} \leq \rho s < r. 
\]
Using this inequality and the decomposition \eqref{eq:K_decomposition}, we have
\[
\sum_{i=1}^r (w_i, K w_i )= \sum_{i=1}^r (v_i, B_D v_i ) \leq \tr (B_D) < r. 
\]
Therefore, there exists at least one $w_i$ such that $(w_i, K w_i ) < 1$, while, by construction of $w_i$'s, all $(w_i, D w_i)=1$. This shows that Eq. \eqref{eq:quadratic_inequality} holds for $v$ equal to such $w_i$. 

 \end{proof}

\begin{theorem} 
     [Minimum principle]\label{thm:min_principle} For a given nonempty $\mathcal V \subseteq [n]$ and   inactive vectors  $(x_j)_{j \in [n], j \notin \mathcal V}$ with $|x_j|=1$, if any, let $x^*$ be a global minimizer of $\mathcal J_{ \mathcal V}$ defined by Eq. \eqref{eq:S_J}  over $\mathcal F_{ \mathcal V}(x)$ defined by
Eq. \eqref{eq:FV}. Assume that for  $\delta>0$ that satisfies Eq. \eqref{eq:delta_bounds}, we have   $\mathcal S_{\mathcal V}(x^*)\le 1+\delta$. Then, at least one $j \in \mathcal V$ satisfies $|x^*_j|=1$. 
\end{theorem}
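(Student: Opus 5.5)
We argue by contradiction: suppose $|x^*_j|<1$ for every $j\in\mathcal V$. The plan is to produce a nonzero direction $v\in V$ such that $x^*+tv$ stays in $\mathcal F_{\mathcal V}(x)$ for all small $|t|$ and $\frac{d^2}{dt^2}\mathcal J_{\mathcal V}(x^*+tv)|_{t=0}<0$. Then one of $\mathcal J(x^*\pm tv)$ is strictly smaller than $\mathcal J(x^*)$, which contradicts global minimality.

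\textbf{Choice of the direction.} Since $x^*$ is compatible with $\mathcal V$ and $\mathcal S_{\mathcal V}(x^*)\le 1+\delta$, Lemma \ref{lem:dimY_lb} applies and gives $\dim Y\ge 1+\rho s>\rho s$, where $Y=W^\perp\cap Y_1$ at $x^*$. By Lemma \ref{eq:weighted_hessian_decomposition} we have $\sum_j\varphi_j D^2L_j=K-D$ with $\tr K_{jj}\le\rho d_j$. Lemma \ref{lem:block trace}, applied on the subspace $Y$, then yields a nonzero $v\in Y$ with $(v,Kv)\le(v,Dv)$. Because $v\in Y_1$, Lemma \ref{lem:elminating_covariance} gives
\[
v^\top D^2\mathcal S(x^*)v=(v,(K-D)v)\le 0.
\]
The penalty term contributes $-\eta\cdot 2|v|^2<0$, since $v$ is a nonzero vector supported on the active coordinates. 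Hence
\[
\frac{d^2}{dt^2}\mathcal J(x^*+tv)\big|_{t=0}\le -2\eta|v|^2<0.
\]

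\textbf{Feasibility of the segment.} Let $x(t)=x^*+tv$ and take $|t|$ small.
\begin{enumerate}
\item[(i)] \emph{Membership in $\mathcal D$.} Each active $|x^*_j|<1$, so $x(t)\in\mathcal D$; the inactive coordinates are unchanged.
\item[(ii)] \emph{Large rows.} Since $v\perp N_I$, we get $d_I(x(t))=d_I(x^*)=0$.
\item[(iii)] \emph{Small rows.} By Remark \ref{rem:small_rows}, $d_I$ is unchanged.
\item[(iv)] \emph{Tight medium rows.} For $I\in\mathcal Q^*_{med}(x^*)$ we have $v\perp\nabla z_I(x^*)$, so \eqref{eq:concavity} gives $z_I(x(t))\le0$.
\item[(v)] \emph{Slack medium rows.} For $I\in\mathcal Q_{med}\setminus\mathcal Q^*_{med}(x^*)$ we have $z_I(x^*)<0$, and continuity keeps $z_I(x(t))<0$ for small $|t|$.
\end{enumerate}
Thus $x(t)\in\mathcal F_{\mathcal V}(x)$ on a two-sided interval around $t=0$.

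\textbf{Conclusion.} $\mathcal J$ is smooth in the active variables. On the admissible set $z\le 0$ the function $\gamma$ is smooth, and on $\mathcal D$ the quantities $q_i>0$ for medium rows. So $f(t)=\mathcal J(x(t))$ is $C^2$ near $0$ with $f''(0)<0$. Hence $f(t)+f(-t)-2f(0)=f''(0)t^2+o(t^2)<0$ for small $t\neq 0$, so $\min(f(t),f(-t))<f(0)$. This contradicts the global minimality of $x^*$, and therefore some $j\in\mathcal V$ has $|x^*_j|=1$.

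The main obstacle is the dimension count that makes $Y$ nontrivial and strictly larger than $\rho s$, since it is what lets Lemma \ref{lem:block trace} apply. This count rests on the hypothesis $\mathcal S_{\mathcal V}(x^*)\le1+\delta$, through Lemma \ref{lem:no_active}. A second point to watch is that Lemma \ref{lem:block trace} only gives the non-strict inequality $(v,Kv)\le(v,Dv)$; strictness comes from the $-\eta\sum|x_j|^2$ term, which is why $\eta>0$ is needed.
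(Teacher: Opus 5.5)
Your proposal is correct and follows the paper's proof essentially step for step. Both argue by contradiction, bound $\dim Y$ via Lemma \ref{lem:dimY_lb}, combine the $K-D$ decomposition with Lemma \ref{lem:block trace} to get a nonzero $v\in Y$ with $v^\top D^2\mathcal S(x^*)v\le 0$, obtain strictness from the $-\eta\sum_j|x_j|^2$ term, and check feasibility of the two-sided segment row type by row type. Your smoothness caveat is unnecessary, since for fixed $\mathcal V$ each $\gamma_I$ is smooth in $x$ everywhere, not only where $z_I\le 0$.
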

\begin{proof} 


We prove this theorem by contradiction. Assume that for all $j \in \mathcal V$, $|x_j^*|<1$.  We  identify with $V=(\R^ {2})^s $ the affine subset of  $(\R^ {2})^n $ obtained by fixing all inactive vectors  $x_j$ with $|x_j|=1$ (if any) where  $s$ is the size of $\mathcal V$  (Eq. \eqref{eq:s}). 
By construction,   $x^*$ belongs to $ \mathcal D$ and is compatible with $\mathcal V$. Therefore, by Lemma \ref{lem:dimY_lb},    
\[
\dim Y  \geq  1+\rho s
\]
where $\rho$  given by \eqref{eq:rho}  and  the subspace $Y$ of $V$ is given by Eq. \eqref{eq:Y}. Also, by our construction of $Y$, for any $v \in Y$  
\begin{align}
v^\top D^2 \mathcal S(x^*) v=  v^\top \Big(\sum_{j \in \mathcal V} \varphi_j(x^*) D^2 L_j(x^*) \Big) v.
\end{align}
for  softmax weights $\varphi_j$ defined by \eqref{eq:softmax_weights}. Then, by Lemma  \ref{eq:weighted_hessian_decomposition}, 
\[
\sum_{j \in \mathcal V} \varphi_j(x^*) D^2L_j(x^*)  =K(x^*)-D(x^*),
\]
for the PSD matrices $D(x^*) = \text{diag}_{j \in \mathcal V}(d_j(x^*) I_2) $ and $K(x^*)$ in $\R^{2s \times 2s}$  satisfying  
\begin{align*}
\tr K_{jj}(x^*) \leq \rho d_j(x^*). 
\end{align*} 
 Therefore, by Lemma \ref{lem:block trace}, we can choose a nonzero vector $v \in Y$ such that $(v,D^2 \mathcal S(x^*) v)\leq 0$, and   therefore
\[
(v, D^2 \mathcal J(x^*) v) \leq - 2\eta  |v|^2 <0.
\]

By construction of $Y$, any $v \in Y$ is orthogonal to the normal vectors  $N_I$ for each $I \in \mathcal Q_{lg}$ associated with the affine constraints corresponding to large rows. Therefore, $d_I(x^*+tv) = d_I(x^*)=0$  for all $t \in \R$ (see Eq. \eqref{eq:normal_lg} and the discussion accompanying it). Also, as discussed previously, $z_I(x^*+tv) \leq 0$ for all $I \in \mathcal Q^*_{med}(x^*)$ (see \eqref{eq:concavity} and the accompanying text). Finally, if an inequality is inactive, i.e., $z_I(x^*)<0$, then by continuity $z_I(x^*+ tv) < 0$ for sufficiently small $|t|$. Similarly, our assumption that for each $j \in \mathcal V$, $|x_j^*|<1$, guarantees that  $|x_j^*+tv_j|< 1$ for sufficiently small $|t|$.   (By Remark \ref {rem:small_rows},  there are no constraints on $x$ in $V$ relating to the admissibility of   small rows, i.e., their admissibility is only determined by the inactive vectors $x_i$ for $i \notin \mathcal V$.) Therefore, $x^* + tv$ belongs to $\mathcal F_{\mathcal V}(x)$ when $|t|$ is sufficiently small.

   However, $g(t) =\mathcal J(x^*+tv)$ has a local minimum at $t=0$, so $g''(0)=v^\top D^2 \mathcal J(x^*) v \geq 0$, which is a contradiction.   Therefore, at least one $j \in \mathcal V$ satisfies $|x^*_j|=1$ .

\end{proof}

\begin{lemma}      [Global minimum] \label{lem:global_min}  Let $(\mathcal V^*, x^*)$ be a pair minimizing  $\mathfrak J$  over  $\mathfrak F $  with ties broken according to Remark \ref{rem:ties}. We    assume that for  $\delta>0$ that satisfies Eq. \eqref{eq:delta_bounds}, we have   $\mathcal S_{\mathcal V^*}(x^*)\le 1+\delta$. Then, we have $\mathcal V^*= \emptyset$. 
\end{lemma}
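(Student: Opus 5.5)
We argue by contradiction: suppose $\mathcal V^*\neq\emptyset$. The plan is to show that the minimizing pair cannot have an active vector on the boundary of its disk, and then let Theorem \ref{thm:min_principle} rule out every active vector lying strictly inside.

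\emph{Step 1: every active vector lies strictly inside its disk.} Suppose some $j_1\in\mathcal V^*$ had $|x^*_{j_1}|=1$. Set $\mathcal V':=\mathcal V^*\setminus\{j_1\}$, so that $\mathcal V'\subsetneq\mathcal V^*$ and $|x^*_j|=1$ for all $j\notin\mathcal V'$. By Lemma \ref{lem:contraction} (Assumption \ref{as:C} is in force), $x^*$ is $\mathcal V'$-admissible, and therefore $(\mathcal V',x^*)\in\mathfrak F$. The same lemma gives
\[
\mathfrak J(\mathcal V',x^*)=\mathcal J_{\mathcal V'}(x^*)\le \mathcal J_{\mathcal V^*}(x^*)=\mathfrak J(\mathcal V^*,x^*).
\]
Hence $(\mathcal V',x^*)$ is also a global minimizer, and it has strictly smaller active set. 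This contradicts the tie-breaking rule of Remark \ref{rem:ties}. We conclude that $|x^*_j|<1$ for every $j\in\mathcal V^*$.

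\emph{Step 2: $x^*$ minimizes the restricted problem.} Freeze the inactive vectors $(x^*_j)_{j\notin\mathcal V^*}$, each of unit norm. By definition \eqref{eq:FV}, $\mathcal F_{\mathcal V^*}(x^*)$ consists of those $v\in\mathcal F_{\mathcal V^*}$ that agree with $x^*$ off $\mathcal V^*$. Every such $v$ gives a pair $(\mathcal V^*,v)\in\mathfrak F$, so
\[
\mathcal J_{\mathcal V^*}(v)=\mathfrak J(\mathcal V^*,v)\ge \mathfrak J(\mathcal V^*,x^*).
\]
Since $x^*$ itself belongs to $\mathcal F_{\mathcal V^*}(x^*)$, it is a global minimizer of $\mathcal J_{\mathcal V^*}$ over this set.

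\emph{Step 3: apply the minimum principle.} The set $\mathcal V^*$ is nonempty and $\mathcal S_{\mathcal V^*}(x^*)\le 1+\delta$ by hypothesis. Theorem \ref{thm:min_principle} therefore yields some $j\in\mathcal V^*$ with $|x^*_j|=1$, contradicting Step 1. Hence $\mathcal V^*=\emptyset$.

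The main point needing care is Step 1: one must check that $x^*$ meets the compatibility hypothesis of Lemma \ref{lem:contraction} relative to $\mathcal V'$. This holds because the vectors outside $\mathcal V^*$ already have unit norm (as $x^*\in\mathcal F_{\mathcal V^*}$) and $|x^*_{j_1}|=1$ by assumption. The rest of the argument is bookkeeping between $\mathfrak J$ and $\mathcal J_{\mathcal V^*}$ restricted to the fiber.
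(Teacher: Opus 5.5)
Your proof is correct and follows the paper's route exactly. You argue by contradiction, use the tie-breaking rule together with Lemma \ref{lem:contraction} to force $|x^*_j|<1$ on $\mathcal V^*$, restrict global optimality to the fiber $\mathcal F_{\mathcal V^*}(x^*)$, and invoke Theorem \ref{thm:min_principle}; your Step 1 usefully spells out (including the compatibility check for $\mathcal V'=\mathcal V^*\setminus\{j_1\}$) what the paper compresses into a single sentence.
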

\begin{proof} Suppose for contradiction that $\mathcal V^* \neq \emptyset$.  The tie-breaking rule (Remark \ref{rem:ties}) and contraction (Lemma \ref{lem:contraction}) imply $|x_j^*|<1$ for every $j\in\mathcal V^*$. Fix the inactive coordinates at $x_j^*$ for $j\notin\mathcal V^*$. Since
\[ x^*\in\mathcal F_{\mathcal V^*}(x^*) \subseteq\mathcal F_{\mathcal V^*}, 
\]
global optimality implies that $x^*$ minimizes $\mathcal J_{\mathcal V^*}$ over $\mathcal F_{\mathcal V^*}(x^*)$. The assumed bound on $\mathcal S_{\mathcal V^*}(x^*)$  therefore permits the application of Theorem \ref{thm:min_principle}.

 Since $\mathcal V^*$ is assumed to be nonempty,  by this theorem, for some $j \in  \mathcal V^*$, we have $|x^*_j| =1$, which is a contradiction. Therefore,  $\mathcal V^* = \emptyset$.

\end{proof}

\begin{theorem} \label{thm:Komlos_bound} 
For every  real Koml\'os matrix $A \in \mathcal A(m,n)$, there exist unit $\ell_2$ norm vectors $x_1, \dots, x_n \in \R^2$ such that
\[
\max_i \Big|\sum_{j \in [n]} A_{ij}x_j\Big|\leq C\sqrt{2}. 
\]
\end{theorem}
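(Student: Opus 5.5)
Fix constants $\alpha,\beta,\lambda,C$ satisfying Assumptions \ref{as:C} and \ref{as:alpha}, so that $\rho<1$ and $\alpha>4/(1-\rho)$. Then pick $\delta>0$ as in \eqref{eq:delta_bounds}, and choose the softmax parameters $\tau,\eta$ so that every global minimizer of $\mathfrak J$ over $\mathfrak F$ satisfies $\mathcal S_{\mathcal V^*}(x^*)\le 1+\delta$. Once that bound holds, Lemma \ref{lem:global_min} forces $\mathcal V^*=\emptyset$. Hence every $x^*_j$ is a unit vector, and $x^*$ is $\emptyset$-admissible. With $\mathcal V=\emptyset$ we have $q_i=0$ for all $i$, so every row is small, and admissibility gives $d_I(x^*)=(\xi,\sum_j A_{ij}x^*_j)\le C$ for all four $\xi\in\Theta_4$. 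Thus both coordinates of $\sum_j A_{ij}x^*_j$ have absolute value at most $C$, and its Euclidean norm is at most $C\sqrt2$. This is the claimed bound. The existence of a global minimizer, with ties broken as in Remark \ref{rem:ties}, was already established from the compactness of $\mathfrak F$.

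The main obstacle is the a priori bound $\mathcal S_{\mathcal V^*}(x^*)\le 1+\delta$. I would obtain it by comparing against the explicit feasible point $(\,[n],0\,)$. Minimality gives
\[
\mathcal J_{\mathcal V^*}(x^*)\le \mathcal J_{[n]}(0)=\mathcal S_{[n]}(0),
\]
and therefore
\[
\mathcal S_{\mathcal V^*}(x^*)\le \mathcal S_{[n]}(0)+\eta\sum_j|x^*_j|^2\le \mathcal S_{[n]}(0)+\eta n .
\]
At $x=0$, Lemma \ref{lem:monotonicity}(a) gives $\gamma_I(0)<1$ for medium rows, while large rows have weight $1$. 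Since $\sum_i a_i(j)^2\le 1$ for each Koml\'os column, $L^{[n]}_j(0)\le 1$. The softmax then satisfies $\mathcal S_{[n]}(0)\le 1+\tau\log n$.

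The naive bound $\tau\log n+\eta n\le\delta$ is not dimension free, but the constants $\tau,\eta$ are allowed to depend on $n$. Only $C$ must be absolute, and $\tau,\eta$ enter neither Assumption \ref{as:C} nor Assumption \ref{as:alpha} nor $\rho$. The minimum principle also works for any $\eta>0$ and $\tau>0$. So I would set $\tau=\delta/(2\log(n+1))$ and $\eta=\delta/(2n)$. I would then double-check that Theorem \ref{thm:min_principle} and Lemma \ref{lem:contraction} hold uniformly for such $n$-dependent $\tau$ and $\eta$. In particular, the contraction inequality $\mathcal J_{\mathcal V'}\le\mathcal J_{\mathcal V}$ needs the monotonicity of $\mathcal S$ under shrinking $\mathcal V$: fewer softmax terms, and $\hat\gamma$ decreasing via Lemma \ref{lem:monotonicity}(c).

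For the numerical constant, I would pick concrete values such as $\beta=1/2$ and $\lambda$ small, so that $\rho=2\lambda<1$. Then I would take $\alpha$ large enough that $4/\alpha+\rho<1$, and set $C$ just above $(2\beta+2/\lambda)\sqrt\alpha$. Optimizing this trade-off should give $C=46$, which matches the $46\sqrt2$ in Theorem \ref{thm:rank_2_Komlos}; this is the verification deferred to Remark \ref{rem:constants}.
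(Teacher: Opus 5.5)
Your proposal is correct and follows the paper's proof essentially step for step. You compare against the feasible pair $([n],0)$ to get $\mathcal S_{[n]}(0)\le 1+\tau\log n$. You take $n$-dependent $\eta=\delta/(2n)$ and $\tau$ of order $\delta/\log n$ (your $\log(n+1)$ works as well as the paper's $\log(en)$) to obtain $\mathcal S_{\mathcal V^*}(x^*)\le 1+\delta$. You then invoke Lemma \ref{lem:global_min} and read off the coordinate bounds from $\emptyset$-admissibility.

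There is one small slip in your side remark on constants. With $\beta=1/2$ one has $\rho=\lambda(1+2\beta)^2/(2\beta)=4\lambda$, not $2\lambda$. Also, $C=46$ is simply a valid choice rather than the result of an optimization: with $\alpha=49/4$ and $\lambda=1/6$ one gets $(2\beta+2/\lambda)\sqrt{\alpha}=45.5<46$ and $4/\alpha+\rho=146/147<1$. None of this affects the theorem, which is stated in terms of $C$.
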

\begin{proof} 

By Assumption  \ref{as:alpha} and Remark \ref{rem:alpha},  there exists a   constant $\delta >0$  satisfying 
\begin{align}
\frac{4  (1 +\delta)}{\alpha}+ \rho \leq 1 \label{eq:1plus_delta_bound}
\end{align}
We set the  parameters $\tau$ and  $\eta$ in the definitions of $\mathcal S_{\mathcal V}$ and $\mathcal J_{\mathcal V}$ as follows:
\begin{align}
\eta := \frac{\delta }{2n}~~\text{and}~~
\tau = \frac{\delta}{2\log(e n)}. \label{eq:softmax_constants}
\end{align}
and choose a global minimizer $(\mathcal V^*,x^*)$ of $\mathfrak J$ over  $\mathfrak F$  for these parameters, with ties broken as in Remark \ref{rem:ties}. 

When $x=0$, the only compatible set of active vectors $\mathcal V$ is $[n]$, and therefore $d_{I}(0) =0$ for all $I \in [m] \times \Theta_4$ and $G_i(0) = q_i([n])$ for all $i \in [m]$. Thus,   we have 
\begin{align}
\hat \gamma_{I, [n]}(0) = \begin{cases} 
0&\text{if}~ q_i=0\\
\gamma_I(0) =  \gamma(0, q_i, q_i)~   &\text{if}~ 0< q_i\leq \alpha \\
1&\text{if}~ q_i>\alpha.
\end{cases} \label{eq:hat_gamma0}
\end{align}
Assumption \ref{as:C} and Lemma \ref{lem:monotonicity}(a) guarantee that $\gamma(0, q_i, q_i) <1 $ for all $I \in \mathcal Q_{med}$, and therefore,
\[
L_j^{[n]}(0) \leq \sum_{i \in [m]}a_i(j)^2   \leq 1
\]
for all $j\in [n]$ due to the $\ell_2$ norm bound on the columns of the Komlos matrices.  Then, by our choice of $\tau$,\footnote{We chose $2\log(e n)$ instead of $2\log( n)$ in the denominator of $\tau$ to avoid treating the $n=1$ case differently from the cases with larger $n$ (for $n\geq 2$, our argument also works with $2\log( n)$ in the denominator of $\tau$).}
\[
\mathcal J_{[n]}(0)=\mathcal S_{[n]}(0) \leq \tau \log (n e^{1/\tau}) = 1+ \tau \log (n) \leq  1 + \frac{\delta}{2}.
\]
By optimality of $( \mathcal V^*, x^*)$, 
\begin{align*}
\mathcal J_{ \mathcal V^*}(x^*)  \leq \mathcal J_{ [n]}(0) \leq 1 + \frac{ \delta}{2}.
\end{align*}
Since $\eta n \leq \delta/2$, we have
\begin{align*}
\mathcal S_{\mathcal V^*}(x^*) = \mathcal J_{\mathcal V^*}(x^*) + \eta \sum_{j \in [n]}|x^*_j|^2  \leq 1 +\delta.
\end{align*}
This shows that the conditions of Lemma \ref{lem:global_min} are satisfied, and therefore the optimal $\mathcal V^* = \emptyset $ where $q_i(\emptyset) =0 $. Since $(\emptyset,x^*)\in\mathfrak F$, we have $x^*\in\mathcal F_\emptyset$. Compatibility gives $|x_j^*|=1$ for every $j$, while $\emptyset$-admissibility gives $d_I(x^*)\le C$ for every row-direction index $I$, i.e.,
\[
\Big (\xi, \sum_{j \in [n]}a_i(j) x^*_j\Big )_{\R^2} \leq C
\]
for each $\xi \in \{\pm e_1, \pm e_2\}$ and  each row $i \in [m]$.  Therefore, the absolute value of each coordinate of 
\[
\sum_{j \in [n]}a_i(j) x^*_j
\]
is bounded by $C$. Thus, the Euclidean norm of this sum is upper bounded by $C\sqrt {2}$. 

\end{proof}
\begin{remark}
\label{rem:constants} The following constants satisfy Assumptions \ref{as:C} and \ref{as:alpha}:
\[
\alpha = \frac{49}{4}, ~\beta = \frac{1}{2}, ~\lambda = \frac{1}{6},~ C = 46.
\] These constants yield $\rho = 2/3$. Taking  $\delta =1/100$,  the left hand side of Eq. \eqref{eq:1plus_delta_bound} is 
\[
\frac{4 (1 +\delta)}{\alpha}+ \rho = \frac{3662}{3675} <1.
\] 
\end{remark}

\section*{Acknowledgements} V.A.K.  expresses his great appreciation to Alexander Rakhlin for introducing him to the Bellman function method of Burkholder. This was foundational in shaping the conceptual  framework for this project. V.A.K. is deeply grateful to Sinan G\"unt\"urk for sharing his insights into combinatorial discrepancy, which motivated and inspired him  to continue pursuing this project. N.G. is indebted to Deane Yang and Shay Sadovsky for many helpful and interesting discussions. N.G. would like to acknowledge the generous support of the National Science Foundation through grant DMS-2144232. \\

 \emph {Use of Large Language Models (LLMs):} This paper was written up until the first draft, and with a complete proof made fully by the authors, without the use of LLMs. Then, the authors used LLMs to brainstorm ways to upgrade the complex Koml\'os result into one for the original Koml\'os conjecture. This exploration was inconclusive, and we have not incorporated anything produced by this exploration in the present manuscript. We also used LLMs for verifying proofs and proofreading the manuscript. Every suggestion made by an LLM system was independently verified, and corrected when necessary, by the authors, who take full responsibility for the manuscript as written.  

In addition, LLMs were used to create Python code to run numerical experiments. The numerical experiments were meant to check the sharpness (or lack of) in constants for several intermediate lemmas. The outcome of the numerical experiments was compatible with our belief that the constant in the main result can be vastly improved.

N.G. utilized the following commercial LLMs: Claude and GLM 5.2. V.A.K. worked with the following commercial LLMs: ChatGPT (Academic Researchers Program) and Claude. V.A.K.’s work was supported in part by a grant of access to OpenAI models through the ChatGPT for Academic Researchers program.

\bibliographystyle{plainurl}
\bibliography{Komlos}

\appendix

\section{Previously omitted proofs }\label{a:omitted proof of some standard proposition}

In this Appendix, we record the proofs of several results used in the paper. Since these proofs are relatively straightforward,  they have been collected here to avoid distraction from the main ideas of the paper. In what follows, we provide the proofs of  Proposition \ref
{eq:complex_vector_equivalence}, Remarks \ref{rem:c} and \ref{rem:omega}, and Lemmas  \ref{lem:Dz}, \ref{lem:grad_hessians}  and \ref{lem:contraction}. 

\begin{proof}[Proof of  Proposition \ref
{eq:complex_vector_equivalence}]

Let us write an arbitrary complex Koml\'os matrix $C$ in $\mathcal C(m,n)$ as $C=: A + i B$ where $A$ and $B$ are matrices in $\R^{m\times n}$, and define a ``stacked matrix''
\[
D:= \begin{pmatrix} A  \\ B\end{pmatrix}
\]
in $\R^{2m \times n}$, and also denote the $j$-th columns of $A$, $B$ and $D$ by $a_j$, $b_j$ and $d_j$. 
Let 
\[
K_2(m,n):=\max_{A \in \mathcal A(m,n)} \text{Vdisc}_2(A),
\] and note that $D$ is a real Koml\'os matrix  in $\mathcal A(2m,n)$:
\[
|d_j|^2 =  |a_j|^2+ |b_j|^2 = \sum_{i \in [m]} |A_{ij}|^2+|B_{ij}|^2  =  \sum_{i \in [m]} |C_{ij}|^2 \leq 1. 
\]
Therefore, there exist $u_1, \dots, u_n \in \mathbb{S}^1$, such that 
\[
 \max_{k \in [2m]}\Big|\sum_{j \in [n]} D_{kj}u_j\Big| \leq K_2(2m,n)
\]
and then, 
\[
 \max_{i \in [m]}\Big|\sum_{j \in [n]} A_{ij}u_j\Big| \leq K_2(2m,n)~~~\text{and}~~~ \max_{i \in [m]}\Big|\sum_{j \in [n]} B_{ij}u_j\Big| \leq K_2(2m,n).
\]
Using the identification of $u_j$'s with $z_j$ in the text accompanying Eq. \eqref{eq:real_complex_identification}, for any row $i \in [m]$, we have 
\[
\Big|\sum_{j \in [n]} C_{ij}z_j\Big|  \leq \Big|\sum_{j \in [n]} A_{ij}z_j\Big| + \Big|\sum_{j \in [n]} B_{ij}z_j\Big|\leq 2 K_2(2m,n).
\]
The reverse inequality follows from the fact that $\mathcal A(m,n) \subset \mathcal C(m,n)$, and for real matrices we already showed that $\text{cdisc}(A) =\text{Vdisc}_2(A)$ in Eq. \eqref{eq:cdisc_vdisc2}. 
\end{proof}

\begin{proof} [Proof of  Remark \ref{rem:c}]
Since  $0 \leq G \leq q$, and the inequality \eqref{eq:C_inequality} holds, we have
\begin{align*}
z(0,G,q) &= \frac{\beta}{\sqrt {q} } G - C+\omega\sqrt{q} \\
&\leq(\beta+\omega)\sqrt {q}  - C\\
&\leq(\beta+\omega)\sqrt {\alpha}   - C\\
&< 0.
\end{align*}
Also, we have
\begin{align*}
\log \gamma(0, G,q) & = \log \Big(\frac{\alpha}{q}\Big) + \frac{\lambda }{\sqrt {q}}\Big(   \frac{\beta  }{\sqrt {q}}  G - C+ \omega \sqrt {q}\Big)  \\
&\leq \log \Big(\frac{\alpha}{q}\Big) + \lambda\Big(   \beta  - \frac{C}{\sqrt {q}}+\omega\Big) \\
& = 2\log \Big(\sqrt {\frac{\alpha}{q}}\Big) + \lambda (\beta +\omega)   - \frac {C \lambda}{\sqrt {\alpha}}\sqrt {\frac { \alpha}{q}}.    
\end{align*}
We set $y := \sqrt {\alpha / q}$, and observe that $ y \geq 1$.
We define 
\begin{align*}
F(y)  &:= 2 \log y + \lambda\Big ( \beta+\omega - \frac{C} {\sqrt {\alpha}}  \, y \Big),
\end{align*}
and note that
\[
\log \gamma(0, G,q) \leq F(y) ~\text{and} ~ F(1) <0.
\]
Since  $\omega \geq 2/\lambda$, 
\[
F'(y) = \frac{2}{ y} -  \frac{\lambda C} {\sqrt {\alpha}}   \leq 2-\lambda (\beta +\omega) <0,
\]
and therefore
\[
\log \gamma(0, G,q) < 0~\text{and}~ \gamma(0, G,q)<1.
\]

\end{proof}

\begin{proof} [Proof of Remark \ref{rem:omega}] We have
\[
\partial_q z(d,G,q) =  - \frac{\beta}{2q^{\frac{3}{2}}}   G + \frac{\omega}{2\sqrt{q}}  = \frac{1}{2\sqrt{q}} \Big(\omega- \frac{\beta}{q}   G  \Big) \geq\frac{1}{2\sqrt{q}} \Big(\omega- \beta  \Big).
\]
Accordingly, any $\omega > \beta$ guarantees that the left hand side above is strictly positive.  Also, 
\[
\gamma(d, G,q)  = \frac{\alpha}{q} \exp \Big(  \frac{\lambda z(d,G,q)  } {\sqrt {q}}  \Big)  
\]
where $ z(d,G,q)= d + \frac{\beta}{\sqrt {q}}G - C+\omega \sqrt{q}$. Therefore,
\[
\log \gamma(d, G,q)  = \log\alpha - \log q +  \frac{ \lambda z(d,G,q)  } {\sqrt {q}}.   
\]
Since $z \leq 0$ and $G \leq q$,
\begin{align*}
\partial_q \log \gamma(d, G,q) & =  - \frac{1}{ q} +   \frac{\lambda}  {\sqrt {q}}\Big (\partial_q z(d,G,q)   - \frac{1}{2} \frac{ z(d,G,q)  } {q}\Big)   \\
 & \geq  - \frac{1}{ q} +   \frac{\lambda}  {\sqrt {q}}\Big (\frac{1}{2\sqrt{q}} \Big(\omega- \frac{\beta}{q}   G  \Big) \Big)   \\
  & \geq     \frac{\lambda}  {2q}\Big(\omega-  \frac{2}{ \lambda }-\beta \Big)   \\
    &=  0.
\end{align*}
\end{proof}

\begin{proof}[Proof of  Lemma \ref{lem:Dz}] Let us define  $dx_j := v_j-x_j$. Then
\[
\text{d}_I(v) - \text{d}_I(x)= \Big (\xi, \sum_{j \in [n]}a_i(j) dx_j\Big )_{\R^2} = \sum_{j \in \mathcal V} (a_i(j) \xi,  dx_j )_{\R^2}
\]
Also, since
\[
|x_j + dx_j |^2 = |x_j|^2 + 2(x_j, dx_j) + |dx_j|^2
\]
we have 
\[
|v_j|^2 - |x_j|^2 = 2(x_j, dx_j) + |dx_j|^2
\]
 and therefore
 \[
 G_i(v) - G_i(x) = \sum_{j \in [n]}a_i(j)^2 (|x_j|^2 -|v_j|^2 ) = -   \sum_{j \in \mathcal V}a_i(j)^2 (2(x_j, dx_j) + |dx_j|^2). 
 \]
Combining the foregoing expressions,
 \begin{align*}
z_I(v) - z_I(x)  &=\sum_{j \in \mathcal V}  \Big [(a_i(j) \xi,  dx_j )_{\R^2}- \frac{\beta}{\sqrt {q_i} }    a_i(j)^2 (2(x_j, dx_j) + |dx_j|^2) \Big ] \\
&= ( \nabla z_I(x), v-x)_{(\R^2)^s}- \frac{\beta}{\sqrt { q_i (\mathcal V)}}\sum_{j \in \mathcal V} a_{i}(j)^2 |v_j-x_j|^2. 
\end{align*}
Then 
\[
z_I(x)=d_I(x) + \frac{\beta}{\sqrt {q_i}} G_i(x) - B(q_i). 
\]
and the gradient $\nabla z_I(x)$ is given  by \eqref{eq:E_block}. Therefore, the corresponding Hessian is
\[
D^2  z_I    =   \frac{\beta}{\sqrt {q_i}} D^2 G_i~~~~~\text{and}~~~~~D^2 G_i   = -2 \text{diag}_{j \in \mathcal V}(a_i(j)^2 I_2) \in \R^{2s \times 2s}.
\]
Since $d_I(x)$ is linear in $x$, its  Hessian is zero. 
This confirms Eq.  \eqref{eq:D2z}.
\end{proof}

\begin{proof}[Proof of  Lemma \ref{lem:grad_hessians}] Since
\[
z_I(x)=d_I(x) + \frac{\beta}{\sqrt {q_i}} G_i(x) - B(q_i). 
\]
and
\[
\gamma_I(x)  = \frac{\alpha}{q_i} \exp  ( \frac{\lambda}{\sqrt {q_i}} ( d_I(x) + \frac{\beta}{\sqrt {q_i}} G_i(x) - B(q_i) ))  
\]
and the gradients $\nabla z_I(x)$ and $\nabla G_i(x)$ in  $(\R^2)^s$ are given respectively by \eqref{eq:E_block} and 
 \[
(\nabla G_i(x))_j   = - 2a_i(j)^2 x_j\in \R^2. 
\]
for $j \in \mathcal V$. Therefore, the corresponding Hessians are
\[
D^2  z_I    =   \frac{\beta}{\sqrt {q_i}} D^2 G_i~~~~~\text{and}~~~~~D^2 G_i   = -2 \text{diag}_{j \in \mathcal V}(a_i(j)^2 I_2) \in \R^{2s \times 2s}.
\]
Since $d_I(x)$ is linear in $x$, its  Hessian is zero. Then
\begin{align}
\nabla  \gamma_I (x)  =  \frac{\lambda}{\sqrt {q_i}} \gamma_I (x) \nabla z_I(x) 
~\text{and}~ D^2  \gamma_I (x)  =  \frac{\lambda}{\sqrt {q_i}} \gamma_I (x) \Big(\frac{\lambda}{\sqrt{q_i}} \nabla z_I (x)  \nabla z_I (x)^\top+   D^2 z_I\Big). \label{eq:D2gamma}
\end{align}
This confirms Eq.  \eqref{eq:Hessian_gamma}.
\end{proof}
\begin{proof} [Proof of Lemma \ref{lem:contraction}] Since the regularization term does not depend on $\mathcal V$, if $x$ is $\mathcal V$ and $\mathcal V'$-\emph {admissible},  to prove 
\[
\mathcal J_{\mathcal V'}(x) \leq \mathcal J_{\mathcal V}(x).
\]
it is sufficient to prove
\[
\mathcal S_{\mathcal V'}(x) \leq \mathcal S_{\mathcal V}(x).
\]

\textbf{  $\mathcal V'$ is empty.}  Let us denote $q_i \equiv q_i(\mathcal V)$ and $q'_i \equiv q_i(\mathcal V')$ for brevity.  When $\mathcal V'$ is empty, then $q'_i=0$, and we need to check 3 cases:
\[
q_i>\alpha, ~ 0< q_i \leq \alpha, ~\text{and}~ q_i =0.
\]
If $q_i=0$ or $q_i>\alpha$, then $d_I \leq C$ for each row-direction index $I$ containing the corresponding $i$ by the  $\mathcal V$-admissibility of $x$. Therefore, $x$ is $\emptyset$-admissible. If $0 < q_i \leq \alpha$, then  $z_I(x) \leq 0$ for each row-direction index $I$ containing the corresponding $i$. Therefore,  $x$ is also  $\emptyset$-admissible by Remark \ref{rem:medium_disc}. Finally, since each $L^{\mathcal V}_j \geq 0$, we have $\mathcal S_{\emptyset} =0\leq \mathcal S_{\mathcal V}$. 

\textbf{ $\mathcal V'$ is non-empty.} When $\mathcal V'$ is nonempty, it is sufficient to prove that
 for every $I$,
\[
\hat \gamma_{I, \mathcal V'}(x) \leq \hat \gamma_{I, \mathcal V}(x).
\]
Therefore, for every $j \in \mathcal V'$,
\[
L^{\mathcal V'}_j(x) \leq L^{\mathcal V}_j(x).
\]
and our lemma follows by the monotonicity of the softmax.   As noted in Section \ref{sec:BJ},  the Euclidean norm of every $x_j$ for $j \notin \mathcal V'$ is equal to one, and therefore
\[
0 \leq G_i(x) \leq q_i'\leq q_i.
\]
Therefore, for $I$ containing any medium row $i$, $(d_I, G_i(x), q_i)$  automatically belongs to $\Omega$. Also    $d_I(x)$ and $G_i(x)$ do not depend on $\mathcal V$. Therefore, if originally, $i$ is a medium row and $q'_i> 0$, then  $(d_I, G_i(x), q'_i)$  also belongs to $\Omega$. We will consider the same 3 cases as before
\begin{enumerate}
    \item\textbf {Old large row --} {\boldmath $q_i>\alpha$:} When $q_i>\alpha$, we have $d_I=0$. If $q'_i>\alpha$ or $q'_i=0$, $x$ will remain $\mathcal V'$-admissible since $d_I$ does not depend on $\mathcal V$.  Also if $q'_i>\alpha$,  $\hat \gamma_{I, \mathcal V'}(x) = \hat \gamma_{I, \mathcal V}(x)=1$, while  if  $q'_i=0$, then $0=\hat \gamma_{I, \mathcal V'}(x) \leq \hat \gamma_{I, \mathcal V}(x)$. 

If $0<q'_i\leq \alpha$, by Lemma \ref{lem:monotonicity}(a), $z(0, G_i(x), q'_i)<0$, and therefore $x$ will be $\mathcal V'$-admissible. We  also have $\hat \gamma_{I, \mathcal V'}(x) \leq \hat \gamma_{I, \mathcal V}(x)=1$.

\item \textbf{Old medium row --} {\boldmath $0<q_i \leq \alpha$:}  If $0 <q_i \leq \alpha$,  we need to consider 2 subcases: \textbf{(a)} If $0<q'_i\leq q_i$, then $z(d_I(x), G_i(x), q_i)\leq 0$ and $ (d_I(x), G_i(x), q'_i) \in \Omega$. Therefore,  by Lemma  \ref{lem:monotonicity}(c), $z(d_I(x), G_i(x), q'_i)\leq 0$ and $\hat \gamma_{I, \mathcal V'}(x) \leq \hat \gamma_{I, \mathcal V}(x)$.  \textbf{(b)}  Finally, if   $0=q'_i<q_i$, then $z(d_I(x), G_i(x), q_i) \leq 0$, and therefore $d_I(x) \leq C$  by Lemma  \ref{lem:monotonicity}(b). Also, in this case, $0=\hat \gamma_{I, \mathcal V'}(x) \leq \hat \gamma_{I, \mathcal V}(x)$. 

\item\textbf{Old small row --} {\boldmath $q_i=0$:} If $q_i=q'_i =0$, then $d_I(x) \leq C$ and $\hat \gamma_{I, \mathcal V'}(x) = \hat \gamma_{I, \mathcal V}(x)=0$. 
\end{enumerate}
The foregoing shows that if $x$ is $\mathcal V$-admissible, then it will remain $\mathcal V'$-admissible and $\hat \gamma_{I, \mathcal V'}(x) \leq \hat \gamma_{I, \mathcal V}(x)$ in all possible cases, which completes the proof of the lemma.

\end{proof}

\end{document}